\setlist[enumerate]{itemsep=2pt,parsep=2pt,before={\parskip=2pt}}
\newcommand{\cosimp}[3]{\xymatrix@1{#1 \ar@<.4ex>[r] \ar@<-.4ex>[r] & {\ }#2 \ar@<0.8ex>[r] \ar[r] \ar@<-.8ex>[r] & {\ } #3 \ar@<1.2ex>[r] \ar@<.4ex>[r] \ar@<-.4ex>[r] \ar@<-1.2ex>[r] & \cdots }}
\newcommand{\adjunction}[4]{\xymatrix@1{#1{\ } \ar@<0.3ex>[r]^{ {\scriptstyle #2}} & {\ } #3 \ar@<0.3ex>[l]^{ {\scriptstyle #4}}}}
\numberwithin{equation}{section}
\newtheorem{theorem}{Theorem}[section]
\newtheorem{proposition}[theorem]{Proposition}
\newtheorem{lemma}[theorem]{Lemma}
\newtheorem{corollary}[theorem]{Corollary}
\newtheorem{question}[theorem]{Question}
\theoremstyle{definition}
\newtheorem{remark}[theorem]{Remark}
\newtheorem{example}[theorem]{Example}
\newtheorem{definition}[theorem]{Definition}
\newtheorem{claim}[theorem]{Claim}
\newtheoremstyle{customNumber}
     {}          
     {}          
     {\itshape}  
     {}          
     {\bfseries} 
     {.}         
     { }         
     {\thmname{#1}\thmnumber{ #2}\thmnote{ #3}}
\theoremstyle{customNumber}
\renewcommand{\phi}{\varphi}
\newcommand{\abs}[1]{\lvert#1\rvert}
\newcommand{\CAT}{\operatorname{CAT}}
\newcommand{\diam}{\operatorname{diam}}
\DeclareMathOperator{\N}{\mathbb{N}}
\DeclareMathOperator{\R}{\mathbb{R}}
\DeclareMathOperator{\Z}{\mathbb{Z}}
\DeclareMathOperator{\Lip}{Lip}
\DeclareMathOperator{\sing}{sing}
\DeclareMathOperator{\vertices}{vert}
\DeclareMathOperator{\edges}{edges}
\DeclareMathOperator{\Star}{star}
\DeclareMathOperator{\sep}{sep}
\DeclareMathOperator{\rad}{rad}
\DeclareMathOperator{\Conn}{Conn}
\DeclareMathOperator{\up}{Up}
\DeclareMathOperator{\GoodName}{\text{weighted tree}}
\DeclareMathOperator{\dist}{dist}
\DeclareMathOperator{\id}{id}
\DeclareMathOperator{\bl}{bilip}
\DeclareMathOperator{\pred}{pred}
\begin{document}

\title[Approximating spaces of Nagata dimension zero by weighted trees]%
{Approximating spaces of Nagata dimension zero by \\ weighted trees}

\author{Giuliano Basso}
\address{Max Planck Institute for Mathematics,
Vivatsgasse 7,
53111 Bonn,
Germany \& Department of Mathematics\\ University of Fribourg\\ Chemin du Mus\'ee 23\\ 1700 Fribourg \\ Switzerland}
\email{basso@mpim-bonn.mpg.de}

\author{Hubert Sidler}
\address{Department of Mathematics\\ University of Fribourg\\ Chemin du Mus\'ee 23\\ 1700 Fribourg \\ Switzerland}
\email{hubert.sidler@unifr.ch}

\keywords{Nagata dimension, weighted tree, ultrametric space, Lipschitz extension}

\subjclass[2020]{Primary 30L05; Secondary 54F45 and 54C20}

\maketitle

\begin{abstract}
We prove that if a metric space \(X\) has Nagata dimension zero with constant \(c\), then there exists a dense subset of \(X\) that is \(8c\)-bilipschitz equivalent to a weighted tree. The factor \(8\) is the best possible if \(c=1\), that is, if \(X\) is an ultrametric space. This yields a new proof of a result of Chan, Xia, Konjevod and Richa. Moreover, as an application, we also obtain quantitative versions of certain metric embedding and Lipschitz extension results of Lang and Schlichenmaier. Finally, we prove a variant of our main theorem for  \(0\)-hyperbolic proper metric spaces. This generalizes a result of Gupta. 
\end{abstract}

\section{Introduction}

\subsection{Statement of main result}

In this article we study how good a given metric space may be approximated by weighted trees. For us, a \textit{weighted tree} is a metric space \((V, d)\) that is obtained by equipping a graph-theoretic weighted tree \(T=(V, E, \omega)\) with its shortest-path metric \(d\). Bilipschitz approximability by weighted trees is a useful notion as weighted trees have many desirable metric embedding and Lipschitz extension properties (see e.g. \cite{MR1765236}, \cite{MR1337355}, \cite{MR1056175}, \cite{MR1738681}). For example, Matou\v{s}ek \cite{MR1738681} proved that any \(n\)-point weighted tree can be embedded into \(\ell_p\), \(1<p<\infty\), with distortion at most \(C_p \cdot (\log \log n)^{\min\{ 1/p, 1/2\}}\). Hence, if every member of a given class of metric spaces is \(C\)-bilipschitz equivalent to some \(n\)-point weighted tree, this directly leads to a corresponding embeddability result for such metric spaces. 

The main result of the present article is the following bilipschitz approximation result for spaces of Nagata dimension zero. This includes  finite metric spaces and ultrametric spaces.

\begin{theorem}\label{thm:main1}  
If a metric space \(X\) has Nagata dimension zero with constant \(c\),  then there exists a dense subset of \(X\) that is \(8c\)-bilipschitz equivalent to a weighted tree. If \(c=1\), then the factor \(8\) is the best possible.
\end{theorem}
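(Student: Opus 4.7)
The plan is to manufacture a nested hierarchy of partitions of $X$ from the Nagata zero hypothesis and realize the resulting combinatorial tree as a weighted tree in bijection with a dense subset of $X$, then address sharpness by exhibiting an extremal ultrametric.

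For every $k \in \Z$, I would apply the definition of Nagata dimension zero at scale $2^k$ to obtain a partition $\mathcal{P}_k$ of $X$ into sets of diameter at most $c \cdot 2^k$ with the property that any subset of diameter at most $2^k$ meets only one piece; in particular, distinct pieces of $\mathcal{P}_k$ are separated by more than $2^k$. By passing to common refinements from above, one may arrange the family $(\mathcal{P}_k)_{k\in\Z}$ to be nested, so that every piece of $\mathcal{P}_k$ has a unique parent in $\mathcal{P}_{k+1}$. For every piece $P$ I would then choose a representative $x_P \in P$ in a consistent manner: whenever $Q$ is the child of $P$ containing $x_P$, set $x_Q = x_P$. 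The set $Y$ of all chosen representatives is dense in $X$, since pieces shrink to arbitrarily small diameter.

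Next, I would form a weighted tree $T$ whose vertex set is, after collapsing maximal descending chains of pieces sharing the same representative, in bijection with $Y$ via the map $\phi$ sending each $y$ to the deepest piece whose representative it is. Edges connect each piece to its parent, and the edge from a level-$k$ piece to its level-$(k{+}1)$ parent carries weight a fixed constant multiple of $c\cdot 2^{k}$. For $y \ne y'$ in $Y$, let $k$ be the smallest integer with $y,y'$ in a common piece of $\mathcal{P}_k$. The separation and diameter properties yield the sandwich
\[
2^{k-1} < d_X(y,y') \leq c\cdot 2^k,
\]
while the geometric sum of edge weights along the unique tree path from $\phi(y)$ to $\phi(y')$ shows that $d_T(\phi(y),\phi(y'))$ lies between constant multiples of $c \cdot 2^k$. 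Comparing the two intervals and rescaling edge weights produces a distortion bound of $8c$.

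For the sharpness statement when $c=1$, I would exhibit an explicit ultrametric space designed to saturate both sides of the above sandwich simultaneously: a countably infinite branching ultrametric whose level-$k$ pieces have diameter essentially $2^k$ and contain representatives separated by distances only slightly above $2^{k-1}$. The main obstacle, and the genuinely delicate part of the proof, is the matching lower bound: one must argue that for \emph{any} weighted tree and any bijection with a dense subset, the unavoidable "up-and-down" traversal cost in a tree, combined with the scale rigidity of ultrametric distances, forces the ratio $d_T/d_X$ to spread over an interval of multiplicative width at least $8$, regardless of how the edge weights are assigned. I would expect the extremal configuration to be pinned down by a scaling argument that plays off constraints at two consecutive levels against each other, ruling out any simultaneous improvement of both sides of the distortion estimate.
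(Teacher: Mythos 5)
Your construction of the weighted tree is essentially the paper's argument: the paper also builds nested partitions at scales $2^k$ (it uses the equivalence classes of the chain relation $\sim_{2^k}$, which are nested automatically, so no refinement step is needed), chooses representatives consistently, and connects each representative to the representative of the parent piece. The one substantive difference is the edge weights: you assign the level-$k$ edge a fixed multiple of $c\cdot 2^k$, whereas the paper weights each edge by the actual distance $d(x_P,x_Q)$ between the two representatives, which makes the inequality $d\le d_T$ automatic from the triangle inequality. Your scheme also yields $8c$ (upper ratio $4\lambda c$, lower ratio $\lambda/2$), but you need to be careful that after collapsing chains of pieces with the same representative, the surviving edge still carries a weight determined by the \emph{parent's} level, or the lower bound on $d_T$ across a level-$k$ split can fail. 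This is fixable, and either weighting gives the theorem's first assertion.

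The genuine gap is the sharpness claim for $c=1$. What you have written there is a description of what would need to be proved, not a proof: "the unavoidable up-and-down traversal cost \dots forces the ratio to spread over an interval of multiplicative width at least $8$" is precisely the hard statement, and the "scaling argument that plays off constraints at two consecutive levels" will not reach the constant $8$. A two-level argument of the kind you sketch (one long edge must be crossed twice near the top, while the endpoints can be close) gives a lower bound of roughly $4$, which is consistent with the easy examples; the jump from $4$ to $8$ is the content of the Chan--Xia--Konjevod--Richa theorem, and the paper spends all of Section~4 reproving it. The mechanism there is a bootstrap over \emph{infinitely many} scales: one introduces $\rad(s)$ (the worst-case radius, in the tree metric, of a metric ball of radius $2s$ in the leaves $X_n$ of the full binary tree) and $\Conn(s)$ (a set of $\alpha$ quantifying how large a ball around a $\preceq_{o,T}$-minimal point of a sphere must stay inside its upward cone), and shows that if the distortion were below $8$, then admissible values of $\alpha$ could be pushed up to $1$, forcing $\rad(s)/(2s)$ to exceed the distortion --- a contradiction. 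Also note that the extremal examples are the finite spaces $X_n$ with $n\to\infty$, not a single infinite branching ultrametric; the statement is a supremum over $n$, and producing one space that forces distortion exactly $8$ for every weighted tree is a stronger claim than what is proved. As it stands, the second half of the theorem is not established by your proposal.
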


Throughout this article, we call a map \(f\colon X\to Y\) between metric spaces $C$-\emph{bilipschitz} for some constant $C\geq 0$ if there exists $s>0$ such that \(sd(x,y) \leq d(f(x),f(y)) \leq C sd(x,y)\) for all \(x,y\in X\). We call two metric spaces \(C\)-\emph{bilipschitz equivalent}, if there exists a bijective and \(C\)-bilipschitz map between them.

For the special case that \(X\) is a finite \(0\)-hyperbolic metric space, Gupta \cite{MR1958411} has proved a result similar to Theorem~\ref{thm:main1}; see Theorem~\ref{thm:gupta} below for the exact statement. 
We remark that for uncountable spaces \(X\), one cannot expect that \(X\) is bilipschitz equivalent to any weighted tree. 
Hence, the conclusion of Theorem~\ref{thm:main1} cannot be strengthened in general. This can be seen by considering the \(p\)-adic integers (see Example~\ref{ex:2-adic-numbers}).

Roughly speaking, Nagata dimension is a variant of Gromov's asymptotic dimension taking into account local and global properties of the metric space (see \cite{MR2423966}, \cite{MR1253544}). 
The following definition is due to Assouad (see \cite{MR651069}). We remark that Assouad's work is closely related to earlier work of Nagata \cite{MR105081} on the dimension theory of metric spaces.

\begin{definition}
A metric space \(X\) has \textit{Nagata dimension zero with constant} \(c\) if for all real numbers \(s>0\), \(X\) admits a covering \(\mathcal{B}_s\) with \(\diam(B)\leq cs\) for every \(B\in \mathcal{B}_s\) and \(d(B, B')> s\) for all distinct \(B\), \(B'\in \mathcal{B}_s\). 
\end{definition}

Clearly, every finite metric space has Nagata dimension zero. Hence, Theorem~\ref{thm:main1} gives a computable upper bound on how good a given finite metric space may be approximated by a weighted tree. Furthermore, any ultrametric space has Nagata dimension zero with constant one. Hence, Theorem~\ref{thm:main1} can also be applied to ultrametric spaces. It turns out that for such spaces the constant \(8\) appearing in Theorem~\ref{thm:main1} is the best possible. This follows directly from a remarkable result of Chan, Xia, Konjevod and Richa \cite{MR2304999}; see Theorem~\ref{thm:lowerBound} below. We suspect that Theorem~\ref{thm:main1} is also optimal when \(c>1\). 

There are several equivalent characterizations of spaces of Nagata dimension zero. For instance, 
a metric space has Nagata dimension zero if and only if it is uniformly disconnected in the sense of David and Semmes (see \cite[Definition 15.1]{MR1616732}), or if and only if it has Lipschitz dimension zero in the sense of Cheeger and Kleiner (see \cite[Proposition 6.3]{MR4205978}). Nagata dimension has been studied extensively by many authors (see e.g.  \cite{MR2520117}, \cite{MR2340955}, \cite{MR2418302}, \cite{MR2200122}, \cite{MR3320519}, \cite{MR2372759}). Moreover, it has been used in various contexts such as in  \cite{MR3881831}, \cite{MR3641484}, \cite{MR3268779}. For recent results concerning the Nagata dimension of minor-closed families of graphs we refer to the articles \cite{bonamy2020asymptotic}, \cite{fujiwara2020asymptotic},  \cite{jorgensen2020geodesic}, \cite{MR3349105}.

Theorem \ref{thm:main1} enables us to improve some embedding and extension results of Lang and Schlichenmaier \cite{MR2200122}. We discuss this in detail in the next paragraph. See Corollaries~\ref{cor:biLip} and \ref{cor:main2} below.

\subsection{Applications} Lang and Schlichenmaier proved that whenever a metric space has Nagata dimension at most \(n\), then it admits a quasisymmetric embedding into the product of \(n+1\) many \(\R\)-trees (see \cite[Corollary 1.4]{MR2200122}).  As a direct consequence of Theorem~\ref{thm:main1}, we have the following improvement of their result in the case when \(n=0\).

\begin{corollary}\label{cor:biLip}
Suppose \(X\) is a complete metric space which has Nagata dimension zero with constant \(c\). Then there exists an \(8c\)-bilipschitz embedding \(f\colon X\to Y\) into a complete \(\R\)-tree \(Y\) such that \(\sing Y\subset f(X)\). 
\end{corollary}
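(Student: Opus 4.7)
The plan is to build $Y$ from the weighted tree supplied by Theorem~\ref{thm:main1}, extend the bilipschitz equivalence to all of $X$ by completeness, and then identify the singular points of $Y$ combinatorially.

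By Theorem~\ref{thm:main1} there is a weighted tree $T = (V, E, \omega)$ whose vertex set $V$ is $8c$-bilipschitz equivalent, via a bijection $g\colon V \to D$, to a dense subset $D \subseteq X$. First I would take $Y_0$ to be the geometric realization of $T$ as an $\R$-tree, obtained by replacing each edge $e = \{u, v\}$ by an isometric copy of $[0, \omega(e)]$ glued to $u$ and $v$. Then $V$ embeds isometrically into the (possibly incomplete) $\R$-tree $Y_0$. Let $Y$ be the metric completion of $Y_0$; this is again a complete $\R$-tree.

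Next, the inverse map $h := g^{-1}\colon D \to V \hookrightarrow Y$ is $8c$-bilipschitz. Since $D$ is dense in $X$ and both $X$ and $Y$ are complete, $h$ extends uniquely to a continuous map $f\colon X \to Y$; a standard continuity argument transfers both the upper and lower Lipschitz bounds, so $f$ is an $8c$-bilipschitz embedding.

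It remains to verify $\sing Y \subseteq f(X)$. The singular points of an $\R$-tree are exactly the branch points and the endpoints. In $Y_0$ the interior points of the realized edges are regular, and the completion only adjoins limits of Cauchy sequences, each of which has a tail eventually confined to a single half-open subinterval of $Y_0$ ending at the limit. Hence the branch points of $Y$ coincide with the vertices of $V$ of graph-degree at least three, and each such vertex $v$ equals $f(g(v)) \in f(X)$. An endpoint of $Y$ is either a leaf of $V$, which lies in $V \subseteq f(X)$, or a limit $y$ in $Y$ of a Cauchy sequence $(y_n)$; choosing $y_n \in V$ by density, the sequence $(g(y_n))$ is Cauchy in $D$ and converges to some $x \in X$ by completeness, and continuity of $f$ gives $f(x) = \lim y_n = y$. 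The principal difficulty is precisely this last step: ruling out that the completion creates branch points not already present in $Y_0$, which rests on the technical observation that Cauchy tails in a realized $\R$-tree are linearly ordered near their limit.
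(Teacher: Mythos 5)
Your overall strategy coincides with the paper's: invoke Theorem~\ref{thm:main1}, realize the resulting weighted tree inside a complete \(\R\)-tree, extend the bilipschitz equivalence from the dense subset to all of \(X\) by completeness, and then locate the singular points. The paper takes \(Y\) to be the injective hull of \((Z,d_T)\) rather than the completion of the geometric realization; by Lemma~\ref{lem:convex-hull} these are the same space, but the injective-hull formulation yields \(Y=Y_0\cup e(X)\) at once (where \(Y_0\) is the union of the realized edges), so every point of \(Y\setminus Y_0\) lies in \(e(X)\) with no analysis of its local structure. Your route forces you to understand the completion points directly, and that is where your write-up has a gap.

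The step I cannot accept as written is the claim that the singular points of an \(\R\)-tree are exactly the branch points and the endpoints. This is false in general: branch points may accumulate at a two-valent point, which is then singular without being a branch point or an endpoint (take \([0,1]\) and attach a short segment at each point \(1/n\); the point \(0\) has exactly two directions, yet no neighbourhood of \(0\) embeds isometrically into \(\R\)). Since this classification is the backbone of your case analysis, it must be verified for the specific \(Y\) you build, and that verification is essentially the content you are trying to skip. What is needed is: (i) every singular point of \(Y_0\) is a vertex, because interior edge points and two-valent vertices have a neighbourhood isometric to an open interval (incident edges have positive length); and (ii) every \(y\in Y\setminus Y_0\) has at most one direction --- if \(y\) were interior to a geodesic, choose \(a',b'\in Y_0\) on either side of \(y\); then \([a',b']\subset Y_0\) because \(Y_0\) is a geodesic subspace of the uniquely geodesic \(Y\), forcing \(y\in Y_0\). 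Your supporting remark that Cauchy tails are ``confined to a single half-open subinterval'' is also not literally true (the approximating points may lie on infinitely many distinct branches whose attaching points converge to \(y\)); the weaker statement (ii) is what you actually need. Finally, in the endpoint case you should justify choosing the approximating sequence inside the vertex set: a Cauchy sequence in \(Y_0\) with no limit in \(Y_0\) cannot have a tail trapped in a single closed edge (compactness would give a limit), so it comes arbitrarily close to the vertices. With these repairs your argument goes through and matches the paper's conclusion.
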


An \textit{\(\R\)-tree} is a uniquely geodesic metric space \(Y\) such that whenever \([y_1, y_2] \cap [y_2,y_3] = \{y_2\}\) then one has \([y_1, y_2] \cup [y_2,y_3] = [y_1,y_3]\) for all \(y_1\), \(y_2\), \(y_3\in Y\). Here, we use \([y_i,y_j]\) to denote the unique geodesic in \(Y\) connecting \(y_i\) to \(y_j\).
We say that \(y\in Y\) is \textit{regular} if there exists an isometric embedding \(U(y, \varepsilon) \to \R\) for some \(\varepsilon>0\), where \(U(y, \varepsilon)\) denotes the open ball of radius \(\varepsilon\) centered at \(y\). Every point which is not regular is said to be \textit{singular}, and we denote by \(\sing Y\) the set of singular points of \(Y\). Notice that if the \(\R\)-tree \(Y=\abs{T}\) is a geometric realization of a weighted tree \(T=(V, E, \omega)\), then \(\sing Y\) consists precisely of those vertices of \(T\) with degree distinct from two. 

A metric space has Nagata dimension zero if and only if it is bilipschitz equivalent to an ultrametric space (see, for example, \cite[Theorem 3.3]{MR2340955} or \cite[Proposition 15.7]{MR1616732}), and every ultrametric space admits an isometric embedding into an \(\R\)-tree (every ultrametric space is $0$-hyperbolic, so the claim follows from Proposition~\ref{prop:dress-1}). Consequently, any space of Nagata dimension zero admits a bilipschitz embedding \(f\) into an \(\R\)-tree \(Y\). Hence, the new part of Corollary~\ref{cor:biLip} is that there is such an \(f\) which satisfies \(\sing Y \subset f(X)\). 

As a direct consequence of the inclusion \(\sing Y \subset f(X)\), it is possible to derive a Lipschitz extension result. Indeed, if \(Y=\abs{T}\) for some weighted tree \(T\), then by interpolating linearly between the images of adjacent vertices, any \(1\)-Lipschitz map defined on \(\vertices T\) to a Banach space admits a \(1\)-Lipschitz extension to \(\abs{T}\). This observation generalizes to \(1\)-Lipschitz maps defined on \(\sing Y\) for all complete \(\R\)-trees \(Y\).

\begin{corollary}\label{cor:main2}
Let \(X\), \(Y\) be complete metric spaces and \(Z\subset X\) a closed subset which has Nagata dimension zero with constant \(c\). Suppose that \(Y\) is \(\lambda\)-quasiconvex.
Then every \(1\)-Lipschitz map \(f\colon Z\to Y\) admits an \(8c \lambda\)-Lipschitz extension \(\bar{f}\colon X\to Y\). 
\end{corollary}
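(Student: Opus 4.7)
The plan is to factor the extension through an auxiliary complete $\R$-tree and exploit its injectivity. Without loss of generality assume $Z$ is complete (otherwise first extend $f$ by uniform continuity to the closure of $Z$ in the completion of $X$; the Nagata dimension zero property is preserved). I would then apply Corollary~\ref{cor:biLip} to $Z$ to obtain a complete $\R$-tree $T$ and an $8c$-bilipschitz embedding $\iota\colon Z\to T$ satisfying $\sing T\subset\iota(Z)$; after rescaling the metric on $T$, I may arrange that $\iota$ is $1$-Lipschitz while $\iota^{-1}\colon \iota(Z)\to Z$ is $8c$-Lipschitz. Since complete $\R$-trees are injective metric spaces, $\iota$ admits a $1$-Lipschitz extension $\tilde\iota\colon X\to T$. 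The composition $g:=f\circ\iota^{-1}\colon\iota(Z)\to Y$ is then $8c$-Lipschitz, and it suffices to extend $g$ to an $(8c\lambda)$-Lipschitz map $\tilde g\colon T\to Y$, for then $\bar f:=\tilde g\circ\tilde\iota$ is the desired extension of $f$.

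Constructing $\tilde g$ will exploit the inclusion $\sing T\subset\iota(Z)$. Since $\iota(Z)$ is closed in $T$, the complement $T\setminus\iota(Z)$ is open and consists only of regular points, so each connected component is an open arc with at most two endpoints in $\iota(Z)$. For each bounded arc component with endpoints $a,b\in\iota(Z)$, $\lambda$-quasiconvexity of $Y$ supplies a rectifiable path in $Y$ from $g(a)$ to $g(b)$ of length at most $\lambda\,d_Y(g(a),g(b))\leq 8c\lambda\,d_T(a,b)$; I would parametrize this path proportionally to arc-length on the segment $[a,b]\subset T$ to define $\tilde g$ on the component. For each unbounded ray issuing from some $a\in\iota(Z)$, I would set $\tilde g\equiv g(a)$. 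Together with $g$ on $\iota(Z)$, these pieces assemble into a continuous map $\tilde g\colon T\to Y$.

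The main obstacle is to verify that $\tilde g$ is globally $(8c\lambda)$-Lipschitz rather than merely on each component. For $s,s'\in T$, parametrize the unique geodesic $[s,s']$ by arc-length as $\gamma\colon[0,d_T(s,s')]\to T$. The open set $A:=\gamma^{-1}(T\setminus\iota(Z))$ decomposes into a countable disjoint union of open subintervals, each mapped by $\gamma$ into a single component of $T\setminus\iota(Z)$; on each such subinterval $\tilde g\circ\gamma$ is $(8c\lambda)$-Lipschitz by construction, while on the closed complement $\gamma^{-1}(\iota(Z))$ it coincides with $g\circ\gamma$ and is $8c$-Lipschitz. A standard length-integration argument then gives $d_Y(\tilde g(s),\tilde g(s'))\leq 8c\lambda\,d_T(s,s')$. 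Composing with $\tilde\iota$ produces the claimed $(8c\lambda)$-Lipschitz extension $\bar f\colon X\to Y$.
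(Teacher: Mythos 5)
Your proposal is correct and follows essentially the same route as the paper: factor through the complete $\R$-tree supplied by Corollary~\ref{cor:biLip}, use injectivity of that tree to extend the embedding to all of $X$, fill in the components of the complement of the image of $Z$ by quasiconvex paths, and check the global Lipschitz bound via the tree's geodesic structure (you merely place the rescaling on the other side, which only redistributes the constants). Your explicit reduction to the case where $Z$ is complete is a small technical point the paper glosses over, and your subdivision argument along $[s,s']$ is a slightly more detailed version of the paper's ``short calculation,'' but neither changes the substance.
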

Here, a metric space \(Y\) is called \(\lambda\)-\textit{quasiconvex}, for some \(\lambda\geq 1\), if all \(y_1\), \(y_2\in Y\) can be connected via a continuous curve \(\gamma\colon [a,b]\to X\) such that \(\ell(\gamma)\leq \lambda d(y_1,y_2)\), where \(\ell(\gamma)\) denotes the length of \(\gamma\). 
Corollary \ref{cor:main2} is an explicit version of a Lipschitz extension result due to Lang and Schlichenmaier (see \cite[Theorem 1.6]{MR2200122}) in the special case when \(Z\) has Nagata dimension zero. A general explicit version of Lang and Schlichenmaier's extension theorem has been obtained by Naor and Silberman \cite[Corollary 5.2]{MR2834732}.

As a special case of Corollary~\ref{cor:main2}, we get the following Lipschitz extension result for metric spaces where the metric assumes only a finite number of values. 
\begin{corollary}\label{cor:finite-dist-set}
Let \(X\), \(Y\) be complete metric spaces and \(Z\subset X\) a closed subset. Suppose that \(Y\) is \(\lambda\)-quasiconvex and the metric on \(Z\) assumes only finitely many values, that is,
\[
n\coloneqq \#\{d(z,z') \,:\, z, z'\in Z\}<\infty.
\] 
Then any \(1\)-Lipschitz map \(f\colon Z\to Y\) can be extended to a \(2^{n+1}\lambda  \)-Lipschitz map \(\bar{f}\colon X\to Y\).
\end{corollary}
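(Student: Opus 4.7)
My plan is to apply Corollary~\ref{cor:main2}. It suffices to show that $Z$ has Nagata dimension zero with constant $c \leq n - 1$, because $n - 1 \leq 2^{n-2}$ for every $n \geq 2$ (a trivial induction on $n$; the cases $n \leq 1$ force $|Z| \leq 1$ and are immediate). Corollary~\ref{cor:main2} then produces an extension with Lipschitz constant $8(n-1)\lambda \leq 2^{n+1}\lambda$, which is the desired bound.

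To verify the Nagata bound, I would enumerate the distance set of $Z$ as $0 = d_0 < d_1 < \cdots < d_{n-1}$ and fix a scale $s > 0$. Three cases arise: if $s < d_1$, the covering of $Z$ by singletons already works; if $s \geq d_{n-1}$, the single set $\{Z\}$ does; otherwise there is a unique $k \in \{1, \ldots, n-2\}$ with $d_k \leq s < d_{k+1}$, and I would take $\mathcal{B}_s$ to be the partition of $Z$ into equivalence classes under the chain relation ``$z \sim z'$ iff there exists a sequence $z = z_0, \ldots, z_m = z'$ in $Z$ with $d(z_i,z_{i+1}) \leq s$ for every $i$.'' Distinct classes are then at distance strictly greater than $s$ by construction.

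The key step, and the only real obstacle, is the diameter bound for an equivalence class $B \in \mathcal{B}_s$. Given $z, z' \in B$ and a connecting chain as above, set $a_i := d(z_0, z_i) \in D$; the reverse triangle inequality gives $|a_{i+1} - a_i| \leq d(z_i, z_{i+1}) \leq d_k$. Hence $(a_i)_{i=0}^m$ is a walk on the finite set $D$ that starts at $0$ and whose successive absolute increments are at most $d_k$. The crucial combinatorial observation is that the subset of $D$ reachable from $0$ by such a walk is exactly $\{0, d_1, \ldots, d_l\}$, where $l$ is the largest index for which $d_j - d_{j-1} \leq d_k$ holds for all $j \leq l$: a gap in $D$ exceeding $d_k$ cannot be crossed in a single step, so no walk can pass beyond $d_l$. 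Telescoping the gaps yields $d_l \leq l \cdot d_k \leq (n-1) d_k$, whence $d(z, z') = a_m \leq d_l \leq (n-1) s$. This proves $\diam B \leq (n-1) s$, so $Z$ has Nagata dimension zero with constant $n - 1$, and the conclusion follows from Corollary~\ref{cor:main2} together with the routine estimate $8(n-1) \leq 2^{n+1}$.
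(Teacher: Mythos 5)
Your argument is correct, and it reaches the conclusion by a genuinely different route in the key step. Both you and the paper reduce the statement to Corollary~\ref{cor:main2} by verifying that \(Z\) has Nagata dimension zero, but the paper uses Fefferman's pigeonhole trick: among the \(n-1\) dyadic windows \((2^{\ell}s, 2^{\ell+1}s]\), \(\ell=0,\dots,n-2\), one must miss the distance set, so the relation \(d(z,z')\leq 2^{\ell_0}s\) is already transitive and gives classes of diameter at most \(2^{n-2}s\), whence the constant \(8\cdot 2^{n-2}\lambda=2^{n+1}\lambda\). You instead take the chain-equivalence classes at scale \(s\) directly and bound their diameter by tracking the walk \(a_i=d(z_0,z_i)\) on the finite distance set: since a gap exceeding \(d_k\) cannot be crossed, the walk is confined to \(\{0,d_1,\dots,d_l\}\) and telescoping the \(l\leq n-1\) small gaps gives \(\diam B\leq (n-1)s\). (Your word ``exactly'' for the reachable set is a harmless overstatement; only the inclusion is needed.) This yields Nagata constant \(n-1\) rather than \(2^{n-2}\), hence an \(8(n-1)\lambda\)-Lipschitz extension, which is stronger than the stated \(2^{n+1}\lambda\) for \(n\geq 4\) and in particular implies the corollary via the trivial inequality \(n-1\leq 2^{n-2}\). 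The trade-off is that the paper's covering is defined by a single closed condition \(d(z,z')\leq 2^{\ell_0}s\) (no chain argument needed, transitivity being immediate from the empty window), while yours requires the diameter analysis of chain components; in exchange you get a linear rather than exponential dependence on \(n\).
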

Here, we use \(\# S\) to denote the cardinality of a set \(S\).

\subsection{Sharpness of Theorem~\ref{thm:main1}}Next, we discuss how sharp the constant \(8c\) appearing in Theorem \ref{thm:main1} is, both asymptotically and for specific values of \(c\).
We define \(c_N(X)\) to be the infimum of those \(c\geq 0\) for which \(X\) has Nagata dimension zero with constant \(c\). Theorem~\ref{thm:main1} is asymptotically sharp with regard to \(c_N(X)\) (up to a multiplicative constant). In fact, if \(C_n\) is the \(n\)-cycle graph equipped with the shortest-path metric, then we find \(c_N(C_n)=\frac{n}{2}\). Moreover, \(d_{\bl}(C_n, Y)\geq n-1\) for every weighted tree \(Y\) (see \cite{MR1486638}). Here, we use the notation
\[
d_{\bl}(X,Y)\coloneqq\inf\big\{\Lip(\psi) \Lip(\psi^{-1}) : \, \psi\colon X\to Y \text{ bijective}\big\}
\]
to denote the \textit{bilipschitz distance} between two metric spaces \(X\) and \(Y\). By the above, it follows that Theorem~\ref{thm:main1} is asymptotically sharp. 

We proceed to consider the case when \(c_N(X)=1\). It is a standard fact that \(X\) is an ultrametric space if and only if \(c_N(X)\) is attained and \(c_N(X)=1\). Let \(B_n\) denote the full binary tree of height \(n\) equipped with the shortest-path distance and \(X_n\subset B_n\) its leaves; see Example~\ref{ex:binary-tree} and Section~\ref{sec:lower-bound} for definitions. It well-known that \(X_n\) is an ultrametric space for every \(n\geq 1\). Due to the following result, the constant 8 appearing in Theorem~\ref{thm:main1} is the best possible for
finite ultrametric spaces.

\begin{theorem}[Chan, Xia, Konjevod and Richa \cite{MR2304999}]\label{thm:lowerBound}
Let \(X_n\) denote the leaves of the full binary tree \(B_n\). Then for every \(K\in [1, 8)\), there exists an integer \(n(K)\geq 1\) such that \[
d_{\bl}(X_{n(K)}, Y) \geq K
\]
for every weighted tree \(Y\).
\end{theorem}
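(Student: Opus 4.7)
The plan is to prove the statement by induction on $n$, establishing that
\[
f(n) := \inf\bigl\{ d_{\bl}(X_n, Y) : Y \text{ a weighted tree}\bigr\} \longrightarrow 8
\]
as $n \to \infty$; any $K \in [1,8)$ then forces $f(n(K)) \geq K$ for all sufficiently large $n(K)$. I would fix a weighted tree $Y$ on $|X_n|$ vertices together with a bijection $\psi : X_n \to Y$ normalized so that $\tfrac{1}{K} d_X \leq d_Y \circ (\psi \times \psi) \leq d_X$, where $K = \Lip(\psi)\Lip(\psi^{-1})$, and exploit the self-similar decomposition $X_n = X_n^L \sqcup X_n^R$ in which each half is isometric to $X_{n-1}$ and every cross pair satisfies $d_X = 2n$.

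Let $A := \psi(X_n^L)$ and $B := \psi(X_n^R)$, and let $T_A, T_B \subset Y$ be the minimal Steiner subtrees they span. Since tree metrics are $0$-hyperbolic, there is a unique shortest path $\pi$ in $Y$ joining $T_A$ and $T_B$, of some length $L \geq 0$ (the degenerate case $T_A \cap T_B \neq \emptyset$ I would treat separately). The bilipschitz bounds yield $\diam T_A, \diam T_B \leq 2(n-1)$ and every cross distance in $[2n/K, 2n]$; this imposes the geometric constraints
\[
L + \max_{a \in A} d_Y(a, \pi) + \max_{b \in B} d_Y(b, \pi) \leq 2n, \qquad L \geq \tfrac{2n}{K} - 2(n-1),
\]
which pin down the macroscopic shape of the triple $(T_A, \pi, T_B)$.

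Each of $T_A, T_B$ is itself a weighted tree approximating $X_{n-1}$, but may have more internal Steiner vertices than $|X_{n-1}|$. I would prune by contracting every degree-two Steiner vertex and reassigning the remaining degree-one Steiner leaves to the nearest image point, producing a weighted tree on exactly $|X_{n-1}|$ vertices at the cost of a controlled multiplicative factor. Combined with the bridge constraints above, this yields a recurrence of the form $f(n) \geq g(f(n-1))$ for a monotone increasing function $g$ whose unique fixed point in $[1,\infty)$ is $8$; iterating with base case $f(1) = 1$ then gives $f(n) \nearrow 8$.

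The hard part is arranging the analysis so that $g$ has fixed point exactly $8$ rather than some strictly smaller value. The path $\pi$ meets $T_A$ at a single vertex $v_A$, and the $v_A$-centred ``radial'' profile of $A$ governs how much of the distance budget along cross pairs is spent on the bridge versus absorbed by the inductive $X_{n-1}$ embedding. Tracking this trade-off carefully---in particular, using the ultrametric rigidity of $X_n^L$ to show that $d_Y(a, v_A)$ is nearly constant as $a$ ranges over $A$, with a multiplicative spread tending to $2$ as $K \nearrow 8$---is the key computation that produces the sharp constant and matches the upper bound furnished by Theorem~\ref{thm:main1}.
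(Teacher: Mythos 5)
Your proposal is a plan rather than a proof, and the plan as stated has gaps that I do not believe can be filled in this form. The central object --- a recurrence $f(n)\geq g(f(n-1))$ in the single variable ``optimal distortion of $X_n$'' whose fixed point is $8$ --- is never derived, and the top-level decomposition you propose cannot produce it. Splitting $X_n$ into two copies of $X_{n-1}$ at cross-distance $2n$ gives the bridge bound $L\geq \tfrac{2n}{K}-2(n-1)$, which is \emph{negative} (hence vacuous) for every $K>1$ once $n$ is large, precisely because $\diam(X_{n-1})/2n\to 1$. So the macroscopic constraints you list degenerate in the regime where the theorem has content, and the ``trade-off'' you gesture at in the last paragraph is exactly the missing proof. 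Relatedly, your Steiner-point pruning step is lossy: contracting degree-two Steiner vertices is harmless, but a minimal Steiner tree has no degree-one Steiner vertices to reassign, and its degree-$\geq 3$ branch points cannot be removed without changing the metric; any multiplicative loss here compounds over $n$ induction steps and destroys the sharp constant. The paper avoids Steiner points entirely via Proposition~\ref{prop:optimalWeights}, which reduces to trees with vertex set $X_n$ and edge weights equal to the true distances, so that $d_T\geq d$ holds exactly and only the upper Lipschitz constant $D(X_n\to T)$ needs to be controlled.

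The paper's actual argument is structurally different and worth comparing against. It argues by contradiction from $K_\infty<8$ and bootstraps \emph{two} intertwined scale-dependent quantities rather than one distortion per $n$: $\rad(s)$, the worst-case tree-metric radius of an ultrametric ball $B(x,2s)$, and $\Conn(s)$, the set of $\alpha$ for which balls $B(x,2\alpha s)$ around $\preceq_{o,T}$-minimal points of spheres stay inside the upper cone $\up_{\preceq_{o,T}}[x]$. Lemma~\ref{lem:upperBound:i=0} converts connectivity into the radius lower bound $\rad(s)\gtrsim 2s/(1-\alpha)$, and the induction of Claim~\ref{claim:1} pushes $\alpha_i\to 1$, forcing $\rad(s)/(2s)\to\infty$ and contradicting $\rad(s)/(2s)\leq K_\infty$. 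The constant $8$ emerges not as a fixed point of a recurrence in $n$ but from the optimization $\min_{\alpha}\bigl(\tfrac{2}{\alpha}+\tfrac{2}{1-\alpha}\bigr)=8$ at $\alpha=\tfrac12$ inside the two case analyses. Your instinct to exploit the self-similarity of $X_n$ and an inductive structure is in the right spirit, but the induction must run over scales $s$ within a single tree (tracking radius and cone-connectivity simultaneously), not over the height $n$ with distortion as the sole invariant.
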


In Section~\ref{sec:lower-bound}, we give a new proof of Chan, Xia, Konjevod and Richa's theorem using different methods. In contrast to their approach we do not consider minor mappings \(f\colon B_n\to X_n\) and only work with notions intrinsic to \(X_n\). Essentially, to prove Theorem~\ref{thm:lowerBound} we use a bootstrapping argument involving two quantities \(\rad(s)\) and \(\Conn(s)\); see Definitions~\ref{def:rad} and \ref{def:conn}. This strategy has the advantage that it can potentially be adapted to families of metric spaces with \(c_N(X)>1\); see Remark~\ref{rem:generalization-to-c-bigger-than-zero}.  
Moreover, our proof of Theorem~\ref{thm:lowerBound} yields the following exponential upper bound on \(n(K)\):
\begin{equation}\label{eq:exponential}
n(K) \leq \exp\Big(\frac{8000}{(8-K)^2}\Big).
\end{equation}

Most likely \eqref{eq:exponential} does not have the right order of growth. As an example, one can take \(n(1)=1\), \(n(2)=2\) and \(n(3)=3\). However, \(n(4)\geq 6\) and obtaining sharp bounds on \(n(K)\) seems to be a difficult problem. 

How sharp is Theorem~\ref{thm:main1} for other values of the Nagata constant \(c_N(X)\)? On account of the following result due to Gupta, if \(X\) is a finite \(0\)-hyperbolic space with \(c_N(X)>1\), then Theorem~\ref{thm:main1} cannot be sharp. 

\begin{theorem}[Gupta \cite{MR1958411}]\label{thm:gupta}
If \(X\) is a \(0\)-hyperbolic finite metric space, then there exists a weighted tree \(Y\) such that \(d_{\bl}(X,Y)\leq 8\).
\end{theorem}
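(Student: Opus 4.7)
This statement is due to Gupta, and the natural plan is to follow the two-step strategy of his original argument. First, represent \(X\) isometrically inside a weighted tree; then remove the resulting Steiner (non-\(X\)) vertices while losing at most a factor of \(8\) in the shortest-path metric. For the representation step, a finite metric space is \(0\)-hyperbolic if and only if it satisfies the four-point condition with constant \(0\), which by the classical theorem of Buneman provides a weighted tree \(T\) together with an isometric embedding \(\iota\colon X \to V(T)\). By subdividing edges and suppressing degree-two vertices one may arrange that every leaf of \(T\) lies in \(\iota(X)\) and every vertex of \(V(T)\setminus \iota(X)\) has degree at least three; I would identify \(X\) with \(\iota(X)\) henceforth.

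The core of the argument is a Steiner-point removal procedure that converts \(T\) into a weighted tree \(Y\) on vertex set \(X\). The approach I would take is to root \(T\) at an arbitrary terminal and, for each Steiner vertex \(s\), assign an anchor \(\tau(s)\in X\) minimising \(d_T(s,\tau(s))\) among the terminals in the subtree hanging below \(s\). Extending \(\tau\) to \(V(T)\) by \(\tau|_X=\id_X\), I would collapse each fibre \(\tau^{-1}(x)\) to the terminal \(x\); since a contraction of a tree is a tree, this yields a graph on \(X\) which becomes a weighted tree \(Y\) after choosing edge weights inherited from the corresponding paths in \(T\).

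For the distortion analysis, the non-contraction direction \(d_Y(x,x')\geq d_X(x,x')\) follows because each \(Y\)-edge corresponds to a path in \(T\) whose weight is at least the \(T\)-distance of its endpoints, which by isometry equals \(d_X\) on terminals. The expansion direction \(d_Y(x,x')\leq 8\,d_X(x,x')\) is the heart of the proof. Consider the unique \(T\)-geodesic from \(x\) to \(x'\) passing through Steiner vertices \(s_1,\dots,s_m\); each \(s_i\) contributes a detour of length at most \(2\,d_T(s_i,\tau(s_i))\) through its anchor. Using that \(\tau(s_i)\) is the nearest terminal to \(s_i\) and that \(d_T(s_i,x)+d_T(s_i,x')\geq d_X(x,x')\), these detour lengths can be charged against segments of the \(T\)-geodesic itself, and a telescoping argument along the path yields the constant \(8\).

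The main obstacle will be this final charging step. A naive bound scales with the depth of \(T\) or with the ratio of largest to smallest edge weight, and obtaining the sharp constant \(8\) requires a careful global bookkeeping in which each anchor detour is matched against a comparable-scale segment of the \(T\)-geodesic between \(x\) and \(x'\). Reproducing Gupta's specific choice of anchors and his pairing of detours with geodesic segments so that the accumulated overhead sums to at most \(8\)—rather than a constant depending on the depth of \(T\) or growing in \(\#X\)—is where I expect most of the technical work to lie.
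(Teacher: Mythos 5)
Your high-level plan---realize \(X\) isometrically in a tree with Steiner vertices and then remove the Steiner vertices at the cost of a bounded factor---is indeed the strategy behind Gupta's theorem, and it is also the skeleton of the paper's Proposition 5.1 (which works intrinsically in the injective hull \(E(X)\) rather than via Buneman, and assembles the general case by decomposing \(E(X)\setminus X\) into connected components, treating each boundary separately, and gluing). The proposal nevertheless has a genuine gap exactly where you locate it: the charging argument that bounds the accumulated detours by \(8\,d(x,x')\) is asserted, not carried out, and with the anchor rule as you state it the claimed telescoping does not go through. If \(\tau(s)\) is simply the nearest terminal below a Steiner vertex \(s\) of an \emph{arbitrary} tree realization, then along the \(T\)-geodesic \(x=v_0,v_1,\dots,v_n\) the only available inequality is \(d_T(v_i,\tau(v_i))\le d_T(v_i,v_{i+1})+d_T(v_{i+1},\tau(v_{i+1}))\); summing the detours \(2\,d_T(v_i,\tau(v_i))\) then yields a bound of order \(\sum_i (n-i)\,d_T(v_i,v_{i+1})\), which grows with the number of Steiner vertices on the path instead of collapsing to a constant multiple of \(d(x,x')\).

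The missing ingredient is a quantitative decay built into the \emph{choice} of auxiliary vertices, not just of their anchors. In the paper's construction one does not start from an arbitrary tree realization: beginning at a base point \(o\in E(X)\setminus X\), the next level of auxiliary points is placed on the sphere of radius \(d(v,c(v))/2\) about \(v\), where \(c(v)\) is a nearest point of \(X\) above \(v\). This halving step is what forces \(c_k\le a_k+2b_k\) in the notation of Proposition 5.1, so that the total detour is dominated by \(4\sum_k(a_k+b_k)\), a quantity that is itself a portion of \(d(x_m,v_1)\), and the constant \(8\) falls out. Without specifying this (or an equivalent) mechanism, your ``pairing of detours with comparable-scale segments'' is precisely the content of the theorem and cannot be deferred as expected technical work. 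Two smaller omissions: you also need the reduction allowing one to assume that no three points of \(X\) are collinear (in the paper this is arranged by passing to the boundaries \(\partial U_\alpha\) of the components of \(E(X)\setminus X\) and gluing the resulting trees via Lemma 5.2), and note that the paper obtains the strict inequality \(d_T<8d\), consistent with the fact that \(8\) is only approached in the limit by Theorem 1.2.
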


See Section~\ref{sec:R-trees} for the definition of \(0\)-hyperbolic spaces. The leaves of a tripod whose edges have different lengths are a simple example of a \(0\)-hyperbolic finite metric space \(X\) satisfying \(c_N(X)>1\). Hence, for such a space we find that Theorem~\ref{thm:main1} is not sharp. But in general we do not know if Theorem~\ref{thm:main1} is optimal for arbitrary metric spaces \(X\)  with \(c_N(X)>1\).

\subsection{A generalization of Gupta's theorem} Every \(0\)-hyperbolic metric space admits an isometric embedding into a complete \(\R\)-tree (see \cite[Theorem 8]{MR753872}).
Thus, Gupta's theorem (Theorem~\ref{thm:gupta}) can be rephrased by saying that every finite subspace of an \(\R\)-tree is \(8\)-bilipschitz equivalent
to a weighted tree.  This has interesting consequences in theoretical computer science (see, for example, \cite{MR2291000}, \cite{MR2257250}). A metric space is called \textit{proper} if every closed ball of finite radius is compact. Our next result is a generalization of Gupta's theorem and an improvement of Corollary~\ref{cor:biLip} for  \(0\)-hyperbolic proper metric spaces:

\begin{theorem}\label{thm:guptaGen}
Every \(0\)-hyperbolic proper metric space \(X\) admits an \(8\)-bilipschitz embedding \(f\colon X\to Y\) into a complete \(\R\)-tree \(Y\) such that \(\sing Y \subset f(X)\).
\end{theorem}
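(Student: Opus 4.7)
The plan is to adapt the strategy used to deduce Corollary~\ref{cor:biLip} from Theorem~\ref{thm:main1}, with the role of Theorem~\ref{thm:main1} played by the following analog of Gupta's theorem: every proper \(0\)-hyperbolic metric space \(X\) contains a dense subset \(D\) which is \(8\)-bilipschitz equivalent to a weighted tree \(T\) with \(\vertices T=D\). Once this is in hand, let \(Y\) be the metric completion of the geometric realization \(\abs{T}\); since the completion of an \(\R\)-tree is an \(\R\)-tree, \(Y\) is a complete \(\R\)-tree, and the \(8\)-bilipschitz bijection \(g\colon D\to \vertices T\subset \abs{T}\) extends uniquely, by completeness of \(X\) and density of \(D\), to an \(8\)-bilipschitz embedding \(f\colon X\to Y\).

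To produce \(D\) and \(T\), I would exploit properness to exhaust \(X\) by a nested sequence \(F_1\subset F_2\subset\cdots\) of finite subsets with dense union \(D\); for instance, take \(F_n\) to be a finite \((1/n)\)-net inside the closed ball of radius \(n\) around a fixed basepoint. For each \(n\), Gupta's theorem (Theorem~\ref{thm:gupta}) furnishes a finite weighted tree \(T_n\) with \(\vertices T_n=F_n\) and an \(8\)-bilipschitz bijection \(g_n\colon F_n\to \vertices T_n\). The trees \(T_n\) need not nest, so I would use a compactness-diagonal argument: for fixed \(m\), the restricted tree metric \(d_{T_n}\restriction_{F_m}\) lies in a bounded region of \([0,\infty)^{F_m\times F_m}\), and the induced combinatorial tree structure on \(F_m\) (obtained by contracting paths through vertices in \(F_n\setminus F_m\)) ranges over a finite set of types. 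Passing to a subsequence stabilizes both the combinatorial type and the limiting edge weights on \(F_m\), and a diagonal extraction does this simultaneously for all \(m\). The resulting compatible family assembles into a countable weighted tree \(T\) with \(\vertices T=D\) whose shortest-path metric is \(8\)-bilipschitz to \(d_X\restriction_D\).

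Verifying \(\sing Y\subset f(X)\) is then straightforward. Singular points of \(\abs{T}\) are precisely the vertices of \(T\) of degree different from \(2\); points interior to edges are always regular in \(\abs{T}\), so \(\sing \abs{T}\subset \vertices T=f(D)\subset f(X)\), and crucially no Steiner branch point is introduced because the limit \(T\) is constructed combinatorially with explicit vertex set \(D\). A singular point \(y\in Y\setminus \abs{T}\) can be approximated, along each of its geodesic directions, by vertices of \(T\); pulling such a sequence back by \(g\) and using the \(8\)-bilipschitz bound yields a Cauchy sequence in \(D\) whose limit in the proper space \(X\) is sent to \(y\) by \(f\).

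The principal obstacle is the compactness-diagonal step. Gupta's construction is not canonical, so the trees \(T_n\) genuinely fail to nest, and one has to verify that, upon passing to a subsequence, the combinatorial tree structures induced on each \(F_m\) stabilize \emph{coherently} across \(m\), so that they genuinely assemble into a single weighted tree on \(D\) and not merely into some more singular Gromov--Hausdorff limit in which extra Steiner branch points outside \(D\) appear. Properness of \(X\) is essential here, since it bounds the combinatorial data in each bounded region and prevents infinitely many branch points from accumulating in the limit.
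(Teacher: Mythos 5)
There is a genuine gap, and it sits exactly where you flag ``the principal obstacle'': the compactness--diagonal step does not work as described. The restriction of a tree metric on \(F_n\) to a subset \(F_m\subset F_n\) is in general \emph{not} realizable by a weighted tree with vertex set \(F_m\); already for a tripod with leaves \(a,b,c\) the induced metric \(d(a,b)=d(b,c)=d(a,c)=2\) cannot be realized by any weighted tree on \(\{a,b,c\}\) (any such tree is a path and forces one distance to be \(4\)). So your ``induced combinatorial tree structure on \(F_m\) obtained by contracting paths through vertices in \(F_n\setminus F_m\)'' is not well defined as a tree on \(F_m\): branch vertices of \(T_n\) lying in \(F_n\setminus F_m\) survive as Steiner points, and contracting the edges through them destroys the metric. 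Consequently the stabilized limit object is at best a weighted tree whose vertex set strictly contains \(D\), and the coherence condition across different \(m\) is itself ill posed. Your assertion that ``no Steiner branch point is introduced because the limit \(T\) is constructed combinatorially with explicit vertex set \(D\)'' is precisely the claim that needs proof, and properness only bounds the number of branch points in bounded regions --- it does not place them in \(D\), nor does it by itself give \(\sing Y\subset f(X)\) for the limit. One could try to rescue the argument by carrying the Steiner points along (they are points of \(F_n\setminus F_m\subset X\), stay in compact sets, and subconverge in \(X\)), but that is a substantive additional construction you have not supplied.

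The paper avoids this issue entirely and takes a different route. It does not invoke Gupta's finite theorem as a black box; instead it (i) reproves Gupta's construction directly for \emph{discrete proper} \(0\)-hyperbolic spaces with no three collinear points, via a Whitney-type subdivision of the injective hull (Proposition~\ref{prop:Gupta-for-extreme-points}); (ii) handles a general \(X\) with \(\sep X>0\) by decomposing \(E(X)\setminus X\) into connected components and gluing the resulting \(\R\)-trees along boundary points (Lemma~\ref{lem:gluing-sep-bigger-than-zero}); and (iii) removes the separation hypothesis by taking \(1/n\)-nets \(X_n\), producing \(8\)-bilipschitz maps \(f_n\colon X_n\to Y_n\) with \(\sing Y_n\subset f_n(X_n)\), and passing to an \emph{ultraproduct}: by properness \((X_n)_{\mathfrak U}\) is isometric to \(X\), and Lemma~\ref{lem:ultrafilter} gives \(\sing (Y_n)_{\mathfrak U}\subset(\sing Y_n)_{\mathfrak U}\subset f_{\mathfrak U}((X_n)_{\mathfrak U})\). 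The ultraproduct is exactly the device that dispenses with any coherence between the non-nested approximating trees, which is the step your diagonal argument cannot currently deliver. Your reduction of the theorem to the existence of a suitable tree on a dense subset, and your verification that completion points and edge-interior points of \(\abs{T}\) are regular, are fine; the construction of that tree is what is missing.
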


In \cite{MR1056175},  Matou\v{s}ek proved that there exists a universal constant \(C>0\) such that every \(1\)-Lipschitz map \(f\colon Z\to E\) from a subset \(Z\subset X\) of an \(\R\)-tree \(X\) to a Banach space \(E\) can be extended to a \(C\)-Lipschitz map \(\bar{f}\colon X\to E\). As a direct corollary of Theorem~\ref{thm:guptaGen}, we can strengthen Matou\v{s}ek's result  in the special case when \(Z\) is
proper as follows:

\begin{corollary}\label{cor:matousek}
Let  \(X\), \(Y\) be metric space and \(Z\subset X\) a \(0\)-hyperbolic proper metric space. Suppose that \(Y\) is \(\lambda\)-quasiconvex.
Then any \(1\)-Lipschitz map \(f\colon Z\to Y\) admits an \(8\lambda\)-Lipschitz extension \(\bar{f}\colon X\to Y\).
\end{corollary}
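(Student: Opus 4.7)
The plan is to reduce the extension problem to one on a complete $\R$-tree, exploiting the structural information provided by Theorem~\ref{thm:guptaGen}. First, I would apply Theorem~\ref{thm:guptaGen} to the proper $0$-hyperbolic space $Z$ to obtain an $8$-bilipschitz embedding $\iota\colon Z\to T$ into a complete $\R$-tree $T$ with $\sing T\subset \iota(Z)$. After rescaling the metric on $T$, I may assume that $\iota$ is $1$-Lipschitz and $\iota^{-1}\colon \iota(Z)\to Z$ is $8$-Lipschitz. Because $Z$ is proper, $\iota(Z)$ is closed in $T$: any Cauchy sequence in $\iota(Z)$ is the image of a Cauchy sequence in $Z$ which, by completeness of $Z$, converges.

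Since complete $\R$-trees are injective metric spaces, $\iota$ extends to a $1$-Lipschitz map $\tilde\iota\colon X\to T$. Set $g\coloneqq f\circ \iota^{-1}\colon \iota(Z)\to Y$; this is $8$-Lipschitz. The plan is to further extend $g$ to an $8\lambda$-Lipschitz map $\bar g\colon T\to Y$ and then put $\bar f\coloneqq \bar g\circ \tilde\iota$. By construction $\bar f|_Z=f$ and $\bar f$ is $8\lambda$-Lipschitz. To construct $\bar g$, I would use that $\sing T\subset \iota(Z)$: every connected component of $T\setminus \iota(Z)$ is an open arc consisting entirely of regular points, hence isometric to an open interval in $\R$. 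On a bounded component of length $\ell$ with endpoints $a, b\in \iota(Z)$, the $\lambda$-quasiconvexity of $Y$ supplies a continuous curve from $g(a)$ to $g(b)$ of length at most $\lambda d(g(a),g(b))\leq 8\lambda \ell$; reparametrizing this curve proportionally over the component yields an extension of $g$ that is $8\lambda$-Lipschitz there. On an unbounded (half-line) component with unique endpoint $a\in \iota(Z)$, I would simply set $\bar g$ to be the constant value $g(a)$.

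The main verification is that the resulting $\bar g$ is globally $8\lambda$-Lipschitz, not merely on each piece of the decomposition. For two points $p, q\in T$ lying in different components (or with exactly one of them in $\iota(Z)$), the geodesic in $T$ from $p$ to $q$ must enter and leave each traversed component through its endpoints in $\iota(Z)$; combining the local Lipschitz estimates along this geodesic via the triangle inequality then produces the global bound. This tree-geometric bookkeeping is the principal technical point of the argument, but it should be routine once the cases are carefully enumerated.
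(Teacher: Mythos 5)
Your proposal is correct and follows essentially the same route as the paper: the paper likewise applies Theorem~\ref{thm:guptaGen} to get the embedding $e\colon Z\to W$ with $\sing W\subset e(Z)$ and then runs exactly the argument of Corollary~\ref{cor:main2} — extend $e$ to $X$ by injectivity of the complete $\R$-tree, fill in the interval components of $W\setminus e(Z)$ with constant-speed quasiconvexity curves, and use the $\R$-tree fact that distances between points in distinct components split through boundary points. The only cosmetic difference is the normalization (you put the factor $8$ on $\iota^{-1}$ so that $\bar g$ is $8\lambda$-Lipschitz, while the paper puts it on $e$ so that $\bar g$ is $\lambda$-Lipschitz); the final constant is the same.
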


We do not know if the constant \(8\lambda\) appearing in Corollary~\ref{cor:matousek} is sharp in general. In the following, we give a short outline of the proof of Theorem~\ref{thm:guptaGen}. The proof is done in three steps. First, we show that if \(\sep X> 0\), where 
\[
\sep X\coloneqq \inf \{ d(x,x') : x, x'\in X,\, x\neq x'\},
\]
and there are no pairwise distinct $x$, $y$, $z\in X$ satisfying $d(x,y)+d(y,z) = d(x,z)$, then Theorem~\ref{thm:guptaGen} is valid. To show this, we use a Whitney-type decomposition of \(E(X)\setminus X\), where \(E(X)\) denotes the injective hull of \(X\) (see Section~\ref{sec:injective-hull} for the definition). This decomposition has been used by Gupta in \cite{MR1958411} and it also appears in the work of Matou\v{s}ek \cite{MR1056175}. 

Second, we suppose that
\(\sep X>0\) and consider the decomposition of \(E(X)\setminus X=\bigcup_{\alpha \in A} U_\alpha\) into connected components. Since \(E(X)\) is an \(\R\)-tree (see Proposition~\ref{prop:dress-1}), \(X_\alpha\coloneqq \partial U_\alpha\) satisfies the assumptions of the first step, and thus we obtain for all \(\alpha\in A\) an \(8\)-bilipschitz embedding $f_\alpha: X_\alpha \to Y_\alpha$ into an \(\R\)-tree \(Y_\alpha\) such that \(\sing Y_\alpha \subset f_\alpha(X_\alpha)\). Now, \(Y\) can be obtained via a gluing construction (see Lemma~\ref{lem:gluing-sep-bigger-than-zero}). Finally, as a last step, we employ an ultrafilter argument (see Lemma~\ref{lem:ultrafilter}) to get rid of the assumption \(\sep X>0\). 

\subsection{Generalizations to higher dimensions?} To any metric space \(Z\) one can associate the weighted complete graph \(\mathcal{C}_Z=(Z, E, \omega)\), where \(E=\bigl\{ e\subset Z : \abs{e}=2\bigr\}\) and \(\omega(\{z,z'\})=d(z,z')\) for all \(\{z, z'\}\in E\). Let \(K_n\) denote the complete graph on \(n\) vertices.
We close the introduction with the following natural question:

\begin{question}\label{qe:question-1}
Fix \(n\geq 0\). Does there exist \(D_n>0\) such that whenever a metric space \(X\) has Nagata dimension less than or equal to \(n\) with constant \(c\geq 1\), there exist a dense subset \(Z\subset X\) and a \(K_{n+3}\)-minor free spanning graph \(H\subset \mathcal{C}_Z\) such that \((Z, d)\) and \((Z, d_H)\) are \(D_n c\)-bilipschitz equivalent?
\end{question}
Every graph-theoretic tree excludes \(K_3\) as a minor. Hence, it follows that \(D_0=8\) by Theorem~\ref{thm:main1}. A positive answer to Question~\ref{qe:question-1} leads directly to a quantitative version of the Lang--Schlichenmaier Lipschitz extension theorem \cite{MR2200122}. In fact, in this case one can use a result of Lee and Naor (see \cite[Theorem 1.7]{MR2129708}) to prove that \(\textrm{ae}(X)\ \lesssim D_n (c+1) (n+1)^2\) whenever \(X\) is complete and has Nagata dimension \(\leq n\) with constant~\(c\). Here, we use \(\textrm{ae}(X)\) to denote the absolute extendability constant of \(X\); see \cite{MR2129708} for more information.

\subsection{Acknowledgments}
We are indebted to Urs Lang and Stefan Wenger for reading and suggesting improvements to this text. The first named author would like to thank Anna Bot for helpful discussions regarding the exposition of this article. We are thankful to the referee for bringing \cite{MR2304999} to our attention. We also thank the referee for a very thorough report that allowed us to correct many inaccuracies in the text.
This research was supported by Swiss National Science Foundation Grant no. 182423.

\section{Preliminaries}\label{sec:preliminaries}

\subsection{Metric notions}
Let \((X,d)\) be a metric space. We use the following standard notation: 
\begin{align}
U(x,s)&\coloneqq\{ x'\in X : d(x,x')< s \}, &B(x,s)&\coloneqq\{ x'\in X : d(x,x')\leq s \}
\end{align}
to denote the open and closed ball of radius \(s\) centered at \(x\), respectively. Furthermore,
\begin{align}\label{eq:defBall}
S(x,s)&\coloneqq\{ x'\in X : d(x,x')=s \} 
\end{align}
denotes the sphere of radius \(s\) centered at \(x\). A metric space \(X\) is called \textit{proper} if every closed ball in \(X\) is compact. An \textit{ultrametric space} is a metric space \(X\) such that 
\[
d(x,y)\leq \max\{ d(x,z), d(z,y)\}
\]
for all \(x\), \(y\), \(z\in X\). 
It is a standard fact (see, for example, \cite[p. 8]{semmes2007introduction}) that \(X\) is an ultrametric space if and only if for every \(s\geq 0\) and every ball \(B=B(x,s)\subset X\), one has \(B=B(x',s)\) for all \(x'\in B\). 

A map \(f\colon X\to Y\) between metric spaces is \textit{\(L\)-Lipschitz}, for some constant \( L\geq 0\), if \(d(f(x), f(y))\leq L d(x,y)\) for all \(x\), \(y\in X\). The infimum of those \(L\geq 0\) for which \(f\) is \(L\)-Lipschitz is denoted by \(\Lip(f)\) and called \textit{Lipschitz constant} of \(f\). We write \(\Lip_{1}(X,Y)\) for the set of all \(1\)-Lipschitz maps from \(X\) to \(Y\). 
We say that \(f\colon X\to Y\)  is \textit{\(C\)-bilipschitz}, for some constant \(C\geq 0\), if there exists \(s>0\) such that 
\[
s\,d(x,y) \leq d(f(x), f(y)) \leq Cs \,d(x,y)
\]
for all \(x\), \(y\in X\). The smallest constant \(C\geq 0\) such that \(f\) is \(C\)-bilipschitz is denoted by \(\dist(f)\) and called \textit{distortion} of \(f\).

\subsection{Injective hulls}\label{sec:injective-hull}
We say that $Y$ is an \textit{injective metric space}, if for every metric space $X$ and every \(f\in \Lip_1(A, Y)\), where $A\subset X$, there is \(\hat{f}\in \Lip_1(X,Y)\) such that $\hat{f}|_A= f$. Injective metric spaces (also called hyperconvex metric spaces) have been introduced by Aronszajn and Panitchpakdi in \cite{MR84762}. A deep result of Isbell \cite{MR182949} shows that every metric space $X$ has an essentially unique \textit{injective hull} $(E(X), e)$, that is, \(E(X)\) is an injective metric space and \(e\colon X\to E(X)\) an isometric embedding, such that every isometric embedding $\iota\colon X\to Y$ into an injective metric space $Y$ factors through $e$. Consequently, if \(Y\subset E(X)\) is an injective metric space containing \(e(X)\), then \(Y=E(X)\). Isbell's construction has been rediscovered by Dress (see \cite{MR753872}) and is a well-known concept in phylogenetic analysis (see \cite{MR1379369} for a survey). Throughout this article, we will sometimes suppose that metric spaces are already isometrically embedded subsets of their injective hulls.

\subsection{Graph theory}
In the following, we recall standard terminology from graph theory as found in \cite{MR2159259} and \cite{MR607504}.
A \textit{graph} \(G\) is a pair \((V, E)\) consisting of a (possibly infinite) set \(V=:\vertices G\) and a subset \(E\subset \{ \{x,x'\} : x, x'\in V, \, x\neq x' \}\). A \textit{path} in \(G\) is a finite sequence \((x_0, x_1, \dots, x_n)\), where \(n\geq 0\), consisting of elements of \(V\) such that \(\{x_i, x_{i+1}\}\in E\) for all \(0\leq i \leq n-1\). We say that \(G\) is \textit{connected} if any two vertices \(x\),\(x'\in \vertices G\) can be connected via a path. A path  \((x_0, x_1, \dots, x_n)\)  in \(G\) is called \textit{cycle} if \(n\geq 3\), \(x_0=x_n\), and \(x_i\neq x_j\) for all \(1\leq i < j \leq n\). A graph having no cycles is called \textit{acyclic}. A \textit{tree} is a connected acyclic graph. A subgraph \(T\subset G\) is called \textit{spanning tree} if \(T\) is a tree and \(\vertices T=\vertices G\). As a consequence of the axiom of choice, every connected graph has a spanning tree (see, for example, \cite[Proposition 8.1.1]{MR2159259}). Let \(T\) be a tree. By definition, for any two points $x$, \(x'\in \vertices T\) there is a unique path \([x,x']_T\coloneqq(x_0, x_1, \dots, x_n)\) from \(x\) to \(x'\), such that \(x_i\neq x_j\) for all \(i\neq j\).

\subsection{Weighted trees}\label{sec:GraphTheory}
Let $(X,d)$ be a metric space and $T=(X,E)$ a tree. For every function $\omega:E\to (0,\infty)$ let \(d_\omega\) denote the metric on \(X\) given by
\begin{equation}\label{eq:weight-function-1}
d_{\omega}(x,x')\coloneqq\sum_{i=0}^{n-1} \omega(\{x_{i}, x_{i+1}\}),  
\end{equation}
where \([x,x']_T=(x_0,\dots, x_n)\) provided \(x\neq x^\prime\). The metric \(d_\omega\) is called \textit{shortest-path metric} of \((T, \omega)\). We set \(X_\omega\coloneqq (X, d_\omega)\). Moreover, if \(\omega(\{ x, x'\})=d(x,x')\) for all \(\{x, x'\}\in E\), then we define \(d_T\coloneqq d_\omega\) and \(T[X]\coloneqq (X, d_T)\). Clearly, \(X_\omega\) is a weighted tree and for every weighted tree \(X\), by definition, there is a tree \(T=(X, E)\) and a weight function \(\omega\colon E\to (0, \infty)\) such that \(X=X_\omega\).

 To find the best approximation of \(X\) by a weighted tree it suffices to consider weighted trees of the form \(T[X]\). This is a direct consequence of the following proposition.

\begin{proposition}
\label{prop:optimalWeights}
Let $X$ be a metric space and $T=(X,E)$ a tree. Then the identity map $X \to T[X]$ has minimal distortion among all identity maps  \(X\to X_\omega\), where \(\omega\) ranges over all functions \(\omega\colon E\to (0, \infty)\). In particular,
one has
\[
\inf\Bigl\{\,d_{\bl}(X,Y)\, :\, Y \text{ weighted tree}\,\Bigr\} =\inf\Bigl\{\, d_{\bl}(X, T[X])\, :\, T=(X, E)\, \text{tree}\,\Bigr\}
\]
for every metric space \(X\). 
\end{proposition}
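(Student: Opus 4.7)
Write $\omega_0$ for the weight function on $E$ defined by $\omega_0(\{x,x'\}) = d(x,x')$, so that $T[X] = X_{\omega_0}$. The plan is first to compute $\dist(\id \colon X \to T[X])$ explicitly by exploiting the fact that each edge of $T$ keeps its original length, and then to show that for any other weight function $\omega$ the distortion of $\id \colon X \to X_\omega$ is at least this value. The displayed identity of infima will then follow by a short pull-back argument.

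First I would note that for any two distinct $x$, $y \in X$ with tree path $[x,y]_T = (x_0, \ldots, x_n)$, the triangle inequality yields
\[
d_{\omega_0}(x,y) = \sum_{i=0}^{n-1} d(x_i, x_{i+1}) \geq d(x,y),
\]
with equality whenever $\{x, y\} \in E$ (the tree path is then a single edge). Hence $\inf_{x \neq y} d_{\omega_0}(x,y)/d(x,y) = 1$, and consequently
\[
\dist(\id \colon X \to T[X]) = \sup_{x \neq y} \frac{d_{\omega_0}(x,y)}{d(x,y)} =: D.
\]

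Now let $\omega \colon E \to (0, \infty)$ be arbitrary, and set $A = \sup_{x \neq y} d_\omega(x,y)/d(x,y)$ and $B = \inf_{x \neq y} d_\omega(x,y)/d(x,y)$, so $\dist(\id \colon X \to X_\omega) = A/B$. For each edge $\{x, x'\} \in E$ one has $d_\omega(x, x') = \omega(\{x, x'\})$, so $\omega(\{x, x'\}) \geq B\, d(x, x')$. Summing this lower bound along an arbitrary tree path $(x_0, \ldots, x_n)$ from $x$ to $y$ gives
\[
d_\omega(x,y) = \sum_{i=0}^{n-1} \omega(\{x_i, x_{i+1}\}) \geq B \sum_{i=0}^{n-1} d(x_i, x_{i+1}) = B \cdot d_{\omega_0}(x, y).
\]
Dividing by $d(x,y)$ and taking the supremum over $x \neq y$ yields $A \geq B \cdot D$, so $A/B \geq D$, which proves the first assertion.

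For the equality of infima the inequality $\leq$ is immediate since every $T[X]$ is itself a weighted tree. Conversely, given any weighted tree $Y$ and any bijection $\psi \colon X \to Y$, I would pull back the combinatorial data of $Y$ via $\psi$ to obtain a tree $T_\psi = (X, E_\psi)$ with a weight function $\omega_\psi$ such that $\psi \colon X_{\omega_\psi} \to Y$ is an isometry; then $\dist(\psi) = \dist(\id \colon X \to X_{\omega_\psi})$. The first part of the proposition now gives $d_{\bl}(X, T_\psi[X]) \leq \dist(\id \colon X \to T_\psi[X]) \leq \dist(\psi)$, and taking the infimum first over $\psi$ and then over $Y$ completes the proof. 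No serious obstacle is anticipated; the crux is the observation that $\omega_0$ is the distinguished weight precisely because edges keep their original length, which forces the lower Lipschitz constant of $\id \colon X \to X_{\omega_0}$ to equal $1$ and thereby makes $\omega_0$ optimal among all weights on $T$.
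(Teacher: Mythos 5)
Your proof is correct and follows essentially the same route as the paper: both arguments hinge on the edge-wise lower bound $\omega(\{x,x'\})\geq B\,d(x,x')$ (the paper uses the infimum over edges, you the infimum over all pairs, which coincide here) summed along tree paths to get $d_\omega \geq B\, d_{\omega_0}$ and hence $A/B \geq \sup d_{\omega_0}/d = \dist(\id\colon X\to T[X])$. Your explicit pull-back argument for the equality of infima is a detail the paper leaves implicit, but it is the intended one.
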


\begin{proof}
Fix $\omega\colon E\to (0,\infty)$. We put
$$
s\coloneqq \inf\bigg\{\frac{d_\omega(x,x')}{d(x,x')} : \{x,x'\} \in E\bigg\}.
$$
If $s = 0$, then the identity map to $(X,d_\omega)$ is not Lipschitz, so there is nothing to prove. Otherwise, we get for all $x,x'\in X$, 
$$
s d(x,x') \leq s d_T(x,x') = s\sum_{i=0}^{n-1} d(x_i,x_{i+1}) \leq \sum_{i=0}^{n-1} \omega(\{x_i,x_{i+1}\}) = d_\omega(x,x'),
$$
where $(x_0,\dots, x_n) = [x,x']_T$. Therefore,
$$
\sup_{x,x'\in X}\frac{d_\omega(x,x')}{d(x,x')}\geq s \sup_{x,x'\in X}\frac{d_T(x,x')}{d(x,x')},
$$
from which the claim follows.
\end{proof}

\subsection{\texorpdfstring{$\R$}{TEXT}-trees}\label{sec:R-trees}
A metric space $X$ is \textit{geodesic} if for every pair of points \(x\), \(x'\in X\) there exists an isometric
embedding \(\sigma\colon [a, b]\to X\) of an interval \([a, b]\subset \R\) into $X$ such that \(\sigma(a)=x\) and \(\sigma(b)=x'\). We denote the image of such an embedding by $[x,x']_X$. 
A metric space \(X\) is $0$-\textit{hyperbolic} if
$$
d(w,x) + d(y,z) \leq \max\{d(w,y) + d(x,z) , d(w,z) + d(x,y)\}
$$
for all \(x\), \(y\), \(z\), \(w\in X\). Metric subspaces of $0$-hyperbolic spaces are $0$-hyperbolic, and any \(\GoodName\) is \(0\)-hyperbolic. 
An \textit{$\R$-tree} is a geodesic and $0$-hyperbolic metric space. Further information concerning $0$-hyperbolic metric spaces can be found in \cite{MR753872}.

Notice that a geodesic metric space is $0$-hyperbolic if and only if all of its geodesic triangles are tripods. In particular, any \(\R\)-tree \(X\) is uniquely geodesic and thus there is no ambiguity in the notation $[x,x']_X$.
Every complete \(\R\)-tree is an injective metric space (see, for example, \cite[Theorem 8]{MR753872}).
By a result of Dress, there is the following connection between \(0\)-hyperbolic spaces and \(\R\)-trees:
\begin{proposition}[Theorem 8 in \cite{MR753872}]\label{prop:dress-1}
Let \(X\) be a metric space. Then the following are equivalent:
\begin{enumerate}
    \item \(X\) is \(0\)-hyperbolic
    \item  $E(X)$ is an $\R$-tree.
\end{enumerate}
In particular, every \(0\)-hyperbolic space admits an isometric embedding into a complete \(\R\)-tree.
\end{proposition}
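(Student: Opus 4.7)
The plan is to establish the equivalence in both directions and then extract the ``in particular'' clause. Direction (2) $\Rightarrow$ (1) is immediate from the conventions recorded in the excerpt: by the definition of an \(\R\)-tree used here every \(\R\)-tree is $0$-hyperbolic, and since metric subspaces of $0$-hyperbolic spaces are also $0$-hyperbolic, the isometric embedding \(e\colon X \to E(X)\) forces \(X\) to inherit $0$-hyperbolicity.

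For the harder direction (1) $\Rightarrow$ (2), I would show separately that \(E(X)\) is geodesic and that it is $0$-hyperbolic, as these two properties together form the definition of an \(\R\)-tree. Geodesicity is a formal consequence of injectivity: given \(y_0, y_1 \in E(X)\), the two-point map \(\{0, d(y_0, y_1)\} \to E(X)\) sending \(0 \mapsto y_0\) and \(d(y_0, y_1) \mapsto y_1\) is $1$-Lipschitz and, by injectivity of \(E(X)\), extends to a $1$-Lipschitz map from the interval \([0, d(y_0, y_1)]\), which by a standard triangle-inequality argument must be an isometry and provides the desired geodesic. For the $0$-hyperbolicity of \(E(X)\) my strategy is to produce, by an explicit construction depending only on the four-point condition on \(X\), an isometric embedding \(\iota_0\colon X \to T\) into some complete \(\R\)-tree \(T\). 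Because every complete \(\R\)-tree is injective, the universal property of the injective hull factors \(\iota_0\) through \(e\) as \(\iota_0 = \iota \circ e\) with \(\iota\colon E(X) \to T\) isometric. Hence \(E(X)\) is isometric to a subspace of the $0$-hyperbolic space \(T\) and is itself $0$-hyperbolic.

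The main obstacle is the construction of the auxiliary \(\R\)-tree \(T\), which is really the content of Dress's theorem; once \(T\) exists, the comparison with \(E(X)\) is formal. For finite \(F \subset X\) one builds a weighted tree realizing \(F\) by adding its points one at a time and using the four-point condition to locate, at each step, the branching position on the existing tree where the new point should attach. Passing to arbitrary \(X\) can be done by assembling these finite embeddings coherently, for instance via an ultrafilter construction in the spirit of Lemma~\ref{lem:ultrafilter} used elsewhere in the paper, or through a direct Gromov-product construction based at a fixed \(x_0 \in X\), exploiting the equivalence of the four-point condition with the ultrametric-type inequality \((x | y)_{x_0} \geq \min\{(x | z)_{x_0},\,(y | z)_{x_0}\}\) on the Gromov products \((x | y)_{x_0} = \tfrac{1}{2}(d(x_0, x) + d(x_0, y) - d(x, y))\); this inequality yields a canonical branching structure that assembles into an \(\R\)-tree which is then completed. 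Finally, the ``in particular'' clause is automatic: injectivity of \(E(X)\) applied to the isometric inclusion into its metric completion \(\overline{E(X)}\) produces a $1$-Lipschitz retraction \(r\colon \overline{E(X)} \to E(X)\); post-composing with the inclusion gives a map that agrees with the identity on the dense subset \(E(X) \subset \overline{E(X)}\), hence equals the identity everywhere, which forces \(\overline{E(X)} = E(X)\) and shows that \(E(X)\) is already complete.
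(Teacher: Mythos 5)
The paper does not prove this proposition at all: it is quoted verbatim as Theorem~8 of Dress \cite{MR753872}, so there is no internal argument to compare against. Judged on its own terms, your skeleton is sound and its purely formal parts are complete and correct: (2)~$\Rightarrow$~(1) via heredity of the four-point condition, geodesicity of \(E(X)\) from injectivity by extending the two-point map \(\{0,d(y_0,y_1)\}\to E(X)\), and completeness of \(E(X)\) via the retraction from its completion are all standard and properly executed. Your route to (1)~$\Rightarrow$~(2) is, however, genuinely different from Dress's: he works directly with the tight span (the space of extremal functions \(f\colon X\to\R\) with \(f(x)+f(y)\ge d(x,y)\)) and shows that the four-point condition forces this function space to be an \(\R\)-tree, whereas you first build an auxiliary complete \(\R\)-tree \(T\supset X\) and then pull \(E(X)\) into \(T\) by the universal property. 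Your route has the advantage of isolating the combinatorial content (the tree construction) from the hull formalism, but it proves the ``in particular'' clause \emph{first} and derives the equivalence from it, which inverts the logical order of the statement.

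Two points need more care before this counts as a proof. First, the construction of \(T\) is the entire mathematical content of the theorem, and you leave it at the level of a strategy (one-point-at-a-time attachment for finite subsets, then ultrafilters or Gromov products); the Gromov-product route in particular requires verifying that the resulting quotient of \(X\times[0,\infty)\) is a genuine \(\R\)-tree and that the embedding is isometric, which is where the four-point condition actually gets used and where a blind write-up can go wrong. Second, two facts you invoke must be established independently of the proposition itself: that the factoring map \(\iota\colon E(X)\to T\) supplied by the universal property is an \emph{isometric} embedding (this needs the essentiality/tightness of Isbell's hull, not just the extension property as stated in Section~\ref{sec:preliminaries}), and that every complete \(\R\)-tree is injective --- a fact the paper also attributes to the very same Theorem~8 of Dress, so you must supply a direct hyperconvexity argument for complete \(\R\)-trees to avoid circularity.
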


A subset \(A\subset X\) of an \(\R\)-tree \(X\) is \textit{convex} if \([a,a']_X\subset A\) for all \(a\), \(a'\in A\). 
The following description of the convex hull of a subset of an \(\R\)-tree is needed in the proof of Corollary~\ref{cor:biLip}.

\begin{lemma}\label{lem:convex-hull}
For every subset \(A\subset X\) of an \(\R\)-tree \(X\), the set
\[
G_1(A)\coloneqq\bigcup_{a,a'\in A} [a,a']_X
\]
is convex. 
Moreover, if \(X\) is complete, then the closure of \(G_1(A)\) is isometric to \(E(A)\).
\end{lemma}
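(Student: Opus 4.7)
The plan is to prove the two assertions in turn.

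\textbf{Convexity of $G_1(A)$.} The key is the auxiliary claim that for every finite subset $F\subset X$ the set $C(F)\coloneqq\bigcup_{x,y\in F}[x,y]_X$ is convex. I would prove this by induction on $|F|$. For the inductive step, write $F=F_0\cup\{z\}$, note that $C(F_0)$ is closed and convex by the induction hypothesis, and use the nearest-point projection $\pi\colon X\to C(F_0)$---which exists and is $1$-Lipschitz for closed convex subsets of an $\R$-tree---to deduce that $[z,c]_X=[z,\pi(z)]_X\cup[\pi(z),c]_X$ for every $c\in C(F_0)$. Thus $C(F)=C(F_0)\cup[z,\pi(z)]_X$ is a union of two convex sets intersecting in the single point $\pi(z)$, and is therefore convex. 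Applying this to $F=\{a_1,a_1',a_2,a_2'\}$ for arbitrary $g_1\in[a_1,a_1']_X$ and $g_2\in[a_2,a_2']_X$ with $a_i,a_i'\in A$ yields $[g_1,g_2]_X\subset C(F)\subset G_1(A)$.

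\textbf{Identification with $E(A)$.} Assume $X$ is complete and set $Z\coloneqq\overline{G_1(A)}$. Since geodesics in an $\R$-tree vary continuously with their endpoints, the closure of a convex set is again convex, so $Z$ is a closed convex subset of the complete $\R$-tree $X$. Hence $Z$ is itself a complete $\R$-tree, and because every complete $\R$-tree is injective (see \cite{MR753872}), $Z$ is an injective metric space. The isometric inclusion $A\hookrightarrow Z$ therefore factors through the canonical embedding $e\colon A\to E(A)$, producing an isometric embedding $\phi\colon E(A)\to Z$. The image $\phi(E(A))$ is an injective subspace of the ambient $\R$-tree $X$ and hence closed and convex in $X$; since it contains $A$, it contains $\overline{G_1(A)}=Z$. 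Combined with the tautological inclusion $\phi(E(A))\subset Z$, this yields $\phi(E(A))=Z$, so $Z$ is isometric to $E(A)$.

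The delicate point, which I anticipate requires the most care, is justifying that $\phi$ is genuinely isometric rather than merely $1$-Lipschitz. This follows from the minimality statement on the injective hull recorded in Section~\ref{sec:preliminaries} (``if $Y\subset E(X)$ is an injective metric space containing $e(X)$, then $Y=E(X)$''), but making it explicit requires a short diagram chase combining the injectivity of both $E(A)$ and $Z$ with the universal factoring property of $e$.
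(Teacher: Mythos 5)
Your proof is correct, and it follows the same overall architecture as the paper's while differing in two localized places. For convexity, both arguments reduce to a four-point configuration and then use the fact that a union of two convex subsets of an \(\R\)-tree sharing a point is convex; the paper exhibits the explicit ``H''-shaped decomposition \(G_1(\{a_1,\dots,a_4\})=\bigcup_{i=1,2}[a_i,p_1]_X\cup[p_1,p_2]_X\cup\bigcup_{i=3,4}[p_2,a_i]_X\), whereas you run an induction on \(\lvert F\rvert\) via the gate (nearest-point projection) onto \(C(F_0)\). Your version is fine, but note that since \(X\) is not assumed complete in this part, the existence of the projection should be justified by the compactness of \(C(F_0)\) (a finite union of geodesic segments) rather than by a general statement about closed convex subsets; the \(1\)-Lipschitz property you mention is not actually needed, only the gate identity \([z,c]_X=[z,\pi(z)]_X\cup[\pi(z),c]_X\). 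For the identification with \(E(A)\), the paper reduces to \(X=E(A)\) and invokes Lemmas 3.2 and 3.3 of \cite{MR3349339} to see that the strongly convex closed set \(G_1(A)^-\) is injective before applying minimality of the injective hull; you instead observe that \(\overline{G_1(A)}\) is a closed convex subset of a complete \(\R\)-tree, hence itself a complete \(\R\)-tree and therefore injective by Dress's theorem already quoted in the preliminaries, and then sandwich \(\phi(E(A))\) between \(A\) and \(\overline{G_1(A)}\). This is slightly more self-contained, as it avoids the external reference. The ``delicate point'' you flag---that the factoring map \(\phi\) is genuinely isometric---is exactly what the universal property recorded in Section~\ref{sec:preliminaries} asserts (every isometric embedding of \(A\) into an injective space factors through \(e\) via an isometric embedding of \(E(A)\)), so no additional diagram chase is required.
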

\begin{proof}
By the definition of a convex set, it suffices to consider the case when \(A\) consists of four points. If \(A=\{a_1, \dots, a_4\}\), then (by relabeling the points if necessary) we find that there are \(p_1\), \(p_2\in X\), such that 
\begin{equation}\label{eq:darstellung-1}
G_1(A)=\bigcup_{i=1,2} [a_i,p_1]_X\cup[p_1,p_2]_X\cup \bigcup_{i=3,4} [p_2,a_i]_X.
\end{equation}
Since the union of two convex subsets of \(X\) that have a point in common is convex, it follows directly from the decomposition \eqref{eq:darstellung-1} that \(G_1(A)\) is convex. 

Suppose now that \(X\) is a complete \(\R\)-tree.
Since \(X\) is injective, without loss of generality we may suppose that \(X=E(A)\). By construction, the closure of \(G_1(A)\) in \(X\), which we denote by \(G_1(A)^-\), is a strongly convex closed subset of \(X\). See \cite[p. 102]{MR3349339} for the definition of a strongly convex subset of a metric space. Using Lemmas 3.2 and 3.3 of \cite{MR3349339}, we obtain that \(G_1(A)^-\) is an injective metric space. As, \(A\subset G_1(A)^- \subset E(A)\), by definition of the injective hull, we infer that the closure of \(G_1(A)\) in \(X\) is isometric to \(E(A)\), as desired. 
\end{proof}

\subsection{Partial orders}\label{sec:PartialOrder}
Given a partial order \(\preceq\) on a set \(M\), we denote by 
\[
\up_{\preceq}[x]\coloneqq \{x'\in X : x \preceq x' \}
\]
the \textit{upper closure} of \(x\in M\). Further, for \(A\subset M\) we denote by 
\[
\min_{\preceq} A\coloneqq\big\{a\in A : \text{there exists no } a'\in A \setminus \{a\}, \text{ such that } a' \preceq a \big\}
\]the set of minimal elements of \(A\). The set \(\max_{\preceq} A\) is defined analogously. A point \(x\in M\) is called \textit{lower bound} of \(A\) if \(x\preceq a\) for all \(a\in A\). Notice that if \(a\in \min_{\preceq}A\), then \(a\) is not necessarily a lower bound of \(A\).   

Now, let \(T\) be a tree and \(o\in \vertices T\). 
The relation \(x \preceq_{o, T} x'\) if and only if \(x\in [o, x']_T\) defines a partial order on \(\vertices T\). Notice that \(\min_{\preceq_{o, T}} A\neq \varnothing\) provided \(A\neq \varnothing\). Further, if $Y$ is an $\R$-tree and $o\in Y$, then, as for trees,  $x \preceq_{o,Y} x'$ if and only if $x \in [o,x']_Y$ defines a partial order on $Y$. 

\subsection{Ultraproducts}
Let \(\{ (X_i, p_i) : i\in I \}\) be a family of pointed metric spaces and set
\[
\ell_\infty(I, X_i)\coloneqq\Bigl\{ (x_i)_{i\in I} \in \prod_{i\in I} X_i \, : \, \sup_{i\in I} d(x_i, p_i) < +\infty \Bigr\}.
\]
Suppose that \(\mathfrak{U}\) is a free ultrafilter on \(I\).  The \textit{ultraproduct} \((X_i)_{\mathfrak{U}}\) of the family \(\{ (X_i, p_i) : i\in I\}\) is the set of all equivalence classes of \(\ell_\infty(I, X_i)\) under the equivalence relation
\[
(x_i)_{i\in I} \sim (x_i^\prime)_{i\in I}\quad \quad \text{   if and only if   } \quad \quad \lim_{\mathfrak{U}} d(x_i, x^\prime_i)=0, 
\]
endowed with the metric 
\[
d((x_i)_\mathfrak{U}, (x_i^\prime)_\mathfrak{U})=\lim_{\mathfrak{U}} d(x_i, x_i^\prime),
\]
where \((x_i)_\mathfrak{U}\) and \((x_i^\prime)_{\mathfrak{U}}\) denote the equivalence classes of \((x_i)_{i\in I}\) and \((x_i^\prime)_{i\in I}\), respectively. Ultraproducts have striking applications in Banach space theory and metric geometry (see, for example, \cite{MR552464}, \cite{MR1608566}). 
The following lemma is employed in the proof of Theorem \ref{thm:guptaGen}.
\begin{lemma}
\label{lem:ultrafilter}
Let \(\{ (Y_i, p_i) : i\in \N \}\) be a sequence of pointed \(\R\)-trees and suppose \(\mathfrak{U}\) is a free ultrafilter on \(\N\). Then \(Y\coloneqq (Y_i)_{\mathfrak{U}}\) is a complete \(\R\)-tree and \(\sing Y\subset ( \sing Y_i )_{\mathfrak{U}}\).
\end{lemma}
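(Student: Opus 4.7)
The plan is to first check that $Y$ is a complete $\R$-tree and then to establish $\sing Y \subset (\sing Y_i)_{\mathfrak{U}}$ by contraposition. Completeness of the ultraproduct along a free ultrafilter on $\N$ is standard. The $0$-hyperbolic inequality passes to $Y$ because $\lim_{\mathfrak{U}} \max(a_i, b_i) = \max(\lim_{\mathfrak{U}} a_i, \lim_{\mathfrak{U}} b_i)$ for any bounded sequences (the ultrafilter decides which of $\{i : a_i \geq b_i\}$ and its complement is large); applying this termwise to the defining four-point inequality yields that $Y$ is $0$-hyperbolic. For the geodesic property, given $x = (x_i)_{\mathfrak{U}}, y = (y_i)_{\mathfrak{U}} \in Y$ at positive distance $d$, we have $d(x_i, y_i) > 0$ for $\mathfrak{U}$-a.e.\ $i$; choose unit-speed geodesics $\sigma_i : [0, d(x_i, y_i)] \to Y_i$ from $x_i$ to $y_i$ for such $i$ and set
\[
\gamma(t) := \bigl(\sigma_i\bigl(t \, d(x_i, y_i) / d\bigr)\bigr)_{\mathfrak{U}}, \quad t \in [0, d].
\]
A direct ultralimit computation shows that $\gamma$ is an isometric embedding of $[0, d]$ into $Y$ connecting $x$ to $y$, so $Y$ is a complete $\R$-tree.

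For the singular inclusion, by contraposition it suffices to show that if $\delta := \lim_{\mathfrak{U}} d(y_i, \sing Y_i) > 0$, then $y := (y_i)_{\mathfrak{U}}$ is regular in $Y$. The geometric input is the following sub-lemma: for any $\R$-tree $Y_i$ and any $y_i \in Y_i$, if $U(y_i, r) \cap \sing Y_i = \varnothing$, then $U(y_i, r)$ embeds isometrically into $\R$. First, open balls in $\R$-trees are convex: given $x, x' \in U(y_i, r)$ and $z \in [x, x']_{Y_i}$, the tripod on $\{x, x', y_i\}$ with median $o$ yields $d(z, y_i) \leq \max(d(x, y_i), d(x', y_i)) < r$. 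Hence $U(y_i, r)$ is itself an $\R$-tree in which every point is regular in $Y_i$; since a branch point is necessarily singular (any of its neighborhoods contains three pairwise disjoint geodesic segments meeting at a single point, which prohibits any isometric embedding into $\R$), the $\R$-tree $U(y_i, r)$ contains no branch points. A connected $\R$-tree without branch points is isometric to a subspace of $\R$ via a signed distance from any base point.

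With the sub-lemma in hand, for $\mathfrak{U}$-a.e.\ $i$ we have $d(y_i, \sing Y_i) > \delta / 2$, so we may fix isometric embeddings $\iota_i : U(y_i, \delta/2) \to \R$ with $\iota_i(y_i) = 0$. For $x = (x_i)_{\mathfrak{U}} \in U(y, \delta/2)$, the inequality $d(x, y) < \delta/2$ forces $d(x_i, y_i) < \delta/2$, and hence $|\iota_i(x_i)| < \delta/2$, for $\mathfrak{U}$-a.e.\ $i$; set $\iota(x) := \lim_{\mathfrak{U}} \iota_i(x_i)$. Independence of the chosen representative and the isometric property both follow from
\[
|\iota(x) - \iota(x')| = \lim_{\mathfrak{U}} |\iota_i(x_i) - \iota_i(x_i')| = \lim_{\mathfrak{U}} d(x_i, x_i') = d(x, x').
\]
Thus $\iota$ is an isometric embedding $U(y, \delta/2) \to \R$, showing that $y$ is regular in $Y$ and completing the contrapositive.

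The principal obstacle is the sub-lemma, which combines the convexity of open balls in $\R$-trees with the rigidity fact that a connected $\R$-tree without branch points embeds isometrically into $\R$. Once these are available, the remainder of the argument is essentially ultralimit bookkeeping.
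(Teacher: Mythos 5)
Your proof is correct and follows essentially the same route as the paper's: both arguments reduce to showing that a point with $\mathfrak{U}$-a.e.\ definite distance to the singular sets admits local isometric embeddings $\iota_i$ into $\R$ whose ultralimit certifies regularity, while otherwise the point is an ultralimit of singular points (the paper phrases this as a dichotomy on the "regularity radius" $r(x_i)$ rather than by contraposition on $d(y_i,\sing Y_i)$, which is logically the same). The only substantive difference is that you prove explicitly the sub-lemma that an open ball avoiding $\sing Y_i$ embeds isometrically into $\R$ — a fact the paper uses implicitly when it selects singular points $s_i$ with $d(x_i,s_i)\leq r(x_i)$ — and you verify the geodesic and $0$-hyperbolicity of the ultraproduct directly instead of citing the standard $\CAT(\kappa)$ ultraproduct result.
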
 

\begin{proof}
A standard argument shows that the ultraproduct of pointed \(\CAT(\kappa)\)-spaces is a complete \(\CAT(\kappa)\)-spaces (see, for example, \cite[Lemma 2.4.4]{MR1608566}). A metric space is an \(\R\)-tree if and only if it is a \(\CAT(\kappa)\)-space for every \(\kappa\in \R\) (see \cite[p. 167]{MR1744486}). Hence, \(Y\) is a complete \(\R\)-tree. In what follows, we show that \(\sing Y\subset ( \sing Y_i )_{\mathfrak{U}}\). 
Fix \(x=(x_i)_{\mathfrak{U}}\in Y\). For every \(x_i\) we denote by \(r(x_i)\geq 0\) the supremum of those \(r\geq 0\) for which there is an isometric embedding \(\phi\colon U(x_i,r)\to (-r,r)\). 

Suppose that \(2r\coloneqq \lim_{\mathfrak{U}} r(x_i)>0\).
We set \(A\coloneqq \{ i\in \N : r(x_i) > r \}\) and for each \(i\in A\) let \(\phi_i\colon U(x_i,r)\to (-r,r)\) be an isometry with \(\phi_i(x_i)=0\). Notice that, by construction, \(A\in \mathfrak{U}\). It is not hard to check that \(U(x,r)\) is contained in \((U(x_i, r))_{\mathfrak{U}}\) and the restriction of \(\phi=(\phi_i)_{\mathfrak{U}}\) to \(U(x,r)\) is an isometry onto \((-r,r)\) mapping \(x\) to the origin. Hence, \(x\) is a regular point of \(Y\). 
Now, suppose that \(\lim_{\mathfrak{U}} r(x_i)=0\). For every \(x_i\) with \(r(x_i)<+\infty\) choose a singular point \(s_i\in Y_i\) such that \(d(x_i, s_i)\leq r(x_i)\). By construction, \((x_i)_{\mathfrak{U}}=(s_i)_{\mathfrak{U}}\) and thus \(x\in ( \sing Y_i )_{\mathfrak{U}}\). By putting everything together, we have shown that if \(x\) is a singular point of \(Y\), then  \(x\in ( \sing Y_i )_{\mathfrak{U}}\), as desired. 
\end{proof}

\section{Approximating spaces of Nagata dimension zero by weighted trees}\label{sec:main-result-and-examples}
In this section we prove Theorem~\ref{thm:main1} and its corollaries. We begin with an example showing that in general Theorem~\ref{thm:main1} is not valid if one requires that the weighted tree is \(8c\)-bilipschitz equivalent to \(X\).
\begin{example}\label{ex:2-adic-numbers}
Let \(X=\Z_p\) denote the \(p\)-adic integers for some prime number \(p\). In what follows, we show that there is no weighted tree \(Y\) such that \(X\) and \(Y\) are bilipschitz equivalent. By Proposition~\ref{prop:optimalWeights}, it suffices
to show that there is no tree \(T=(X,E)\) such that \(T[X]\) and \(X\) are bilipschitz equivalent.
To this end, let \(T=(X,E)\) be any tree. There is a point \(x\in X\), such that the neighborhood 
\[
N_T(x)\coloneqq\big\{ x'\in X \, : \, \text{\(x\) and \(x'\) are adjacent in \(T\)}\big\}
\]
of \(x\) in \(T\) contains uncountably many points. Indeed if this is not the case, then \(X\) is the countable union of countable sets. But this is not possible, since X is uncountable.

Now, since \(X\) is a \(p^n\)-valued metric space, there exist an integer \(n\geq 0\) and an uncountable subset \(A\subset N_T(x)\), such that \(d(x,a)=p^{-n}\) for all \(a\in A\). By construction, \(d_T(a,a')=2\cdot p^{-n}\) for all distinct points \(a\), \(a'\in N_T(x)\). As \(X\) is separable and \(A\) is uncountable, the set \(A\) contains points that are arbitrarily close to each other, and thus 
\[
\sup_{a\neq a'} \frac{d_T(a,a')}{d(a,a')}=\infty.
\]
It follows that \(X\) and \(T[X]\) cannot be \(C\)-bilipschitz equivalent for any \(C\geq 1\).
Hence, \(X\) is not bilipschitz equivalent to any weighted tree.
\end{example}

Before we proceed with the proof of Theorem~\ref{thm:main1}, we introduce some notation. 
A pair \((G, r_0)\) consisting of a graph \(G\) and a point \(r_0\in \vertices G\) is called \textit{rooted graph}. 
We use the following `star' operation on rooted graphs. 
Let \(I\) be a set with \(0\in I\) and \(\mathcal{F}\coloneqq\{ (G_i, r_i) : i\in I \}\) a family of rooted graphs. 
The rooted graph \(\Star(G_0; \mathcal{F})\) is by definition the graph with root \((0, r_0)\),  vertex set
\[V=\bigcup_{i\in I} \,\bigl\{ (i, v) : v\in V_i \bigr\},\]
and the edge set \(E\) is given by the rule \(\{ (i, v), (j, w) \} \in E \) if and only if 
\begin{itemize}
\item \(\text{if } i=j,\text{ then } \{v, w\} \in E_i\), 
\item \(\text{if } i\neq j, \text{ then } (i,v)=(0,r_0) \text{ and } (j,w)=(j,r_j)\) or vice versa.
\end{itemize}
For example, if each of the graphs \(G_i\in \mathcal{F}\) is a singleton graph, then the rooted graph \(\Star(G_0; \mathcal{F})\) is a star graph with central vertex \(r_0\). Moreover, if each of the graphs \(G_i\) is a tree, then \(\Star(G_0; \mathcal{F})\) is a tree as well.

We need the following characterization of spaces of Nagata dimension zero. Let \(X\) be a metric space and \(s>0\) a real number. Following David and Semmes (see \cite[p. 158]{MR1616732}), we call a sequence of points \(x_1, \dots, x_k\in X\) an \textit{\(s\)-chain} if \(d(x_{i}, x_{i+1})\leq s\) for all \(1 \leq i <k\). We write \(x\sim_s x'\) if \(x\) and \(x'\) can be connected via an \(s\)-chain. Clearly, \(\sim_s\) is an equivalence relation on \(X\).
We write \([x]_s\) for the equivalence class of \(x\in X\) under the relation \(\sim_s\). Notice that \(X\) has Nagata dimension zero with constant \(c\) if and only if for every \(s>0\), one has \(\diam [x]_s \leq cs\) for all \(x\in X\). Let \(\mathcal{B}\) be a collection of subsets of \(X\). A function \(p\colon \mathcal{B}\to X\) is called \textit{choice function} if \(p[B]\in B\) for all \(B\in \mathcal{B}\). Now we are in position to prove our main theorem.
\begin{proof}[Proof of Theorem \ref{thm:main1}]
For each \(i\in \Z\), we put \(s_i\coloneqq 2^i\) and write
\[
\mathcal{B}_{i}\coloneqq \bigl\{ [x]_{s_i} : x\in X \bigr\}
\]
for the partition of \(X\) into equivalence classes of \(\sim_{s_i}\).
Notice that \(\diam B\leq c 2^i\) for every \(B\in \mathcal{B}_i\). Given \(B\in \mathcal{B}_i\), let \(\pred^{(i)}[B]\subset \mathcal{B}_{i-1}\) denote the subset consisting of all \(B'\in \mathcal{B}_{i-1}\) with \(B'\subset B\). Fix \(o\in X\) and set \(B_{o,i}\coloneqq [o]_{s_i}\in \mathcal{B}_i\) for all \(i\in \Z\).

For every \(i\in \Z\) there is a choice function \(p^{(i)}\colon \mathcal{B}_i\to X\) satisfying \(p^{(i)}[B_{o,i}]=o\) and 
\[
p^{(i)}[B]\in \Bigl\{ p^{(i-1)}[B'] : B^\prime \in \pred^{(i)}[B] \Bigr\}
\] 
for every $B\in \mathcal{B}_i$. In order to see this, first fix a choice function $p^{(0)}\colon \mathcal{B}_0\to X$ with $p^{(0)}(B_{o,0}) = o$ using the axiom of choice. The claim then follows by induction, once for the positive integers and the negative integers, respectively. Notice that, in general, \(\mathcal{B}_i\cap \mathcal{B}_{i+1}\neq \varnothing\) and \(\pred^{(i)}[B]\neq \pred^{(i+1)}[B]\) 
for \(B\in \mathcal{B}_i\cap \mathcal{B}_{i+1}\).  
To ease notation, we set
\[
\mathcal{B}\coloneqq \bigcup_{i\in \Z} \mathcal{B}_i\times \{i\}
\]
and define the function $p:\mathcal{B}\to X$ by setting $p[(B,i)]= p^{(i)}[B]$. Further, we let \(i[B]\in \Z\) denote the second coordinate of \(B\in \mathcal{B}\), and we put \(\pred[(B,i)]\coloneqq\pred^{(i)}[B]\times \{i-1\}\) for all \((B, i)\in \mathcal{B}\). By construction, \(\pred[B]\subset \mathcal{B}\) for all \(B\in \mathcal{B}\).

Given \(B\in \mathcal{B}\) we denote by \(T^{0}_B\) the rooted singleton graph \((\{p[B]\}, \varnothing, p[B])\).
We define the rooted tree \(T_{B}^{\ell+1}\) by induction on $\ell\geq 0$ as follows. Fix \(B\in \mathcal{B}\) and consider the family \(\mathcal{F}\coloneqq\bigl\{ T_{B'}^{\ell} : B'\in \pred[B]\bigr\}\).
We set 
\[T^{\ell+1}_B\coloneqq \Star\big(T_{B_{0}^\prime}^{\ell}; \mathcal{F}\big),
\] where \(B_{0}^\prime\in \pred[B]\) is the unique member such that \(p[B]\) is contained in  \(B_{0}^\prime\) (or, more precisely, the first coordinate of \(B_{0}^\prime\in \mathcal{B}\) contains \(p[B]\)). Notice that, by construction, the rooted tree \(T_B^{\ell}\) has height at most \(\ell\). Let \(d_{T^{\ell}_{B}}\) denote the shortest-path metric of the weighted graph \((T^{\ell}_{B}, \omega)\), where \(\omega(\{x,x'\})\coloneqq d(x,x')\) for all \(\{x,x'\}\in \edges T^{\ell}_{B}\).
For the remainder of this proof, we will use the notation $X_B^\ell := \vertices T_B^\ell\subset X$. We claim that for all \(\ell\geq 0\) and all \(B\in \mathcal{B}\),
\begin{equation}\label{eq:ineq2}
d_{T^{\ell}_{B}}(x,p[B]) \leq c \,2^{i[B]} \, \sum_{k=0}^{\ell-1} 2^{-k}
\end{equation}
for all \(x\in X^{\ell}_B\).  Clearly, if \(\ell=0\), then \eqref{eq:ineq2} is valid for all \(B\in \mathcal{B}\). Now, fix \(\ell>0\) and \(B\in \mathcal{B}\). For every \(x\in X_B^{\ell}\) there is a unique \(B'\in \pred[B]\) such that \(x\in B'\), so by setting \(i\coloneqq i[B]\), we obtain by construction of \(T_B^{\ell}\), 
\begin{equation*}\label{eq:ineq3}
d_{T_B^{\ell}}(x,p[B])=d(p[B], p[B'])+d_{T_{B'}^{\ell-1}}(x, p[B']) \leq c 2^{i}+c 2^{i-1} \sum_{k=0}^{\ell-2} 2^{-k},
\end{equation*}
as claimed. Next, we show that for all \(\ell \geq 0\) and all \(B\in \mathcal{B}\),
\begin{equation}\label{eq:ineq1}
d(x_1, x_2)\leq d_{T_B^{\ell}}(x_1,x_2)\leq 8c \, d(x_1, x_2).
\end{equation}
for all \(x_1\), \(x_2\in X_B^{\ell}\). This is clearly the case when \(\ell=0\). 
Fix \(\ell >0\) and \(B\in \mathcal{B}\), and let \(x_1\), \(x_2\in X_B^{\ell}\). By induction, we may suppose that \eqref{eq:ineq1} is valid if \(x_1\), \(x_2\in B'\) for some \(B'\in \pred[B]\). Now, suppose \(x_1\in B_1\) and \(x_2\in B_2\), where \(B_1\), \(B_2\in \pred[B]\) with \(B_1\neq B_2\).
Notice that this implies \(2^{i-1}\leq d(x_1,x_2) \), where \(i=i[B]\). By the use of \eqref{eq:ineq2}, we obtain
\begin{align*}
 d_{T_B^{\ell}}(x_1,x_2) &\leq d_{T_B^{\ell}}(x_1,p[B])+d_{T_B^{\ell}}(p[B],x_2)\leq 2c \, 2^i \sum_{k=0}^{\ell-1} 2^{-k}. 
\end{align*}
Hence, 
\[
d_{T_B^{\ell}}(x_1,x_2)\leq 2c \, 2^i\, \bigl( 2- 2^{-\ell+1}\bigr)\leq 8c\, \bigl( 1- 2^{-\ell}\bigr) \, d(x_1, x_2),
\]
as claimed. To finish the proof we construct a dense subset \(Z\subset X\) and a tree \(T=(Z,E)\) such that 
\[
d(x_1, x_2)\leq d_T(x_1, x_2)\leq  8c \, d(x_1, x_2)
\]
for all \(x_1\), \(x_2\in Z\). We define 
\[
T\coloneqq\bigcup_{i\geq 0} T^{2i}_{B_{o,i}} 
\]
and \(Z\coloneqq\vertices T=\bigcup_{i\geq 0} X^{2i}_{B_{o,i}}\). By definition of the choice function \(p\colon \mathcal{B}\to X\), it follows that \(T^{2j}_{B_{o,j}}\) is an induced subgraph of \( T^{2i}_{B_{o,i}}\) for all \(i\geq j\), and so \(T\) is well-defined. By construction, \(T\) is a tree and \(Z\subset X\) a dense subset. Fix \(x_1\), \(x_2\in Z\). There exists \(i\geq 0\) such that \(x_1\), \(x_2\in X_{B_{o,i}}^{2i}\). Hence, as \(T_{B_{o,i}}^{2i}\subset T\) is an induced subgraph, 
\[d_T(x_1, x_2)=d_{T_{B_{o,i}}^{2i}}(x_1, x_2).\] Therefore, by using \eqref{eq:ineq1}, we obtain \(d(x_1, x_2)\leq d_T(x_1, x_2)\leq  8c \, d(x_1, x_2)\), as desired. Thus \(Y\coloneqq (Z, d_T)\) and \(Z\) are \(8c\)-bilipschitz equivalent. This completes the proof.
\end{proof}

In what follows, we show that for $2^n$-valued ultrametric spaces the constant $8$ appearing in Theorem~\ref{thm:main1} can be improved to \(4\). A metric space \(X\) is \textit{\(2^n\)-valued}
if for every pair of distinct points \(x\), \(x'\in X\) there exists an integer \(n=n_{xx'}\in\Z\) such that \(d(x,x')=2^n\). The metric space of \(2\)-adic integers \(\Z_2\) is a prime example of a \(2^n\)-valued ultrametric space. 

\begin{proposition}\label{prop:2^n-valued}
For every  \(2^n\)-valued ultrametric space \(X\), there exists a dense subset \(Z\subset X\) and a weighted tree \(Y\) such that \(d_{\bl}(Z,Y)\leq 4\).
\end{proposition}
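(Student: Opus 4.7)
The plan is to follow the proof of Theorem~\ref{thm:main1} verbatim and sharpen only the single estimate that is responsible for the factor \(8\). With the same choice \(s_i = 2^i\), the same partitions \(\mathcal{B}_i\), the same choice functions \(x^{(i)}\) and rooted trees \(T_B^{\ell}\), and the same assembly of the \(T^{2i}_{B_{o,i}}\) into a tree \(T\) on a dense subset \(Z \subset X\), the only thing left to redo is the concluding bilipschitz calculation.

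The improvement relies on the following observation. Since \(X\) is ultrametric, the equivalence classes \([x]_{s_i}\) coincide with the closed balls \(B(x, 2^i)\). Whenever \(x_1\) and \(x_2\) belong to distinct members \(B_1\), \(B_2 \in \pred[(B, i)] \subset \mathcal{B}_{i-1}\), the points \(x_1\), \(x_2\) cannot be joined by a \(2^{i-1}\)-chain, so \(d(x_1, x_2) > 2^{i-1}\). Because \(X\) is \(2^n\)-valued, every positive distance is an integer power of two, and thus this strict inequality promotes to \(d(x_1, x_2) \geq 2^i\). This is the only place in the argument where the improved hypothesis enters.

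Combining this sharpened lower bound with the upper estimate \eqref{eq:ineq2} and the triangle inequality through \(x[B]\) yields, using \(c = 1\),
\[
d_{T_B^{\ell}}(x_1, x_2) \;\leq\; 2\cdot 2^i \sum_{k=0}^{\ell-1} 2^{-k} \;=\; 2\cdot 2^i \bigl( 2 - 2^{1-\ell} \bigr) \;\leq\; 4\bigl( 1 - 2^{-\ell} \bigr)\, d(x_1, x_2).
\]
Assembling the trees \(T_{B_{o,i}}^{2i}\) exactly as at the end of the proof of Theorem~\ref{thm:main1} then produces a dense subset \(Z \subset X\) and a tree \(T = (Z, E)\) such that \(d(z, z') \leq d_T(z, z') \leq 4\, d(z, z')\) for all \(z\), \(z' \in Z\), so the weighted tree \(Y \coloneqq (Z, d_T)\) satisfies \(d_{\bl}(Z, Y) \leq 4\). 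The only subtle point is the promotion of \(d(x_1, x_2) > 2^{i-1}\) to \(d(x_1, x_2) \geq 2^i\); once that is in place no genuine obstacle remains, as the rest of the proof of Theorem~\ref{thm:main1} carries over without change.
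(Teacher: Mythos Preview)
Your proof is correct and takes essentially the same approach as the paper: both reuse the construction from Theorem~\ref{thm:main1} verbatim and re-derive the bilipschitz estimate using the \(2^n\)-valued hypothesis. The paper phrases the sharpening by summing the edge lengths along the tree path between \(x\) and \(x'\) (which form a geometric series bounded by \(4\,d(x,x')\)), while you phrase it as promoting \(d(x_1,x_2)>2^{i-1}\) to \(d(x_1,x_2)\geq 2^i\) inside the induction; these are the same observation.
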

\begin{proof}
Let \(Z\subset X\) and the tree \(T=(Z,E)\) be constructed as in the proof of Theorem~\ref{thm:main1}.
Fix two distinct points \(x\), \(x'\in Z\) and let \(n\in \Z\) denote the unique integer such that \(d(x,x')=2^n\). By construction of \(T\), there exists \(j \in \N\) such that \(x\), \(x'\in X^{2j}_{B_{o,j}}\) and there exist \(x_0, \dots, x_m\), \(x_0', \dots, x_m'\in X\) such that, \(x_0=x\), \(x_0'=x'\), \(x_m=x_m'\), $d_T(x_{i-1}, x_i)\leq 2^{(n-m)+i}$, and $d_T(x'_{i-1}, x'_i)\leq 2^{(n-m)+i}$ for all \(i=1, \dots, m\). Consequently,
\[
d_T(x,x')\leq 2\sum_{i=n-m+1}^n 2^i<4\cdot 2^n=4\,d(x,x')
\]
and so \(d_{\bl}(Z,Y)\leq 4\), where \(Y\coloneqq (Z, d_T)\). This completes the proof.
\end{proof}
The following example of Gupta \cite[Theorem~6.1]{MR1958411} shows that Proposition~\ref{prop:2^n-valued} cannot be improved in general. We slightly adapted the construction.
\begin{example}\label{ex:binary-tree}
Let us briefly recall the construction of the full binary tree \(B_n=(V_n, E_n)\) of height $n\geq 0$. For $n= 0$, $B_0=\{r\}=:X_0$ is the singleton graph. Given $B_n$ and $X_n$, the tree $B_{n+1}$ is obtained from \(B_n\) by connecting every vertex in $X_n\subset B_n$ with two new edges to two new vertices. Let us denote by $X_{n+1}\subset V_{n+1}$ the newly created vertices. Note that $X_n$ is exactly the set of leaves of $B_n$ for every $n\geq 1$. The vertex $r\in B_0 \subset B_n$ is the root of $B_n$.

Now, fix $n\geq 2$ and define $G_n:=(W_n,F_n)= \Star(B_n, B_{n-1})$, i.e. a rooted graph, where the root has three children and each of the children is the root of a copy of $B_{n-1}$. The level of the root of $G_n$ is defined as $0$ and for every vertex $x\in W_n$, the level denotes the number of edges it passes on the unique shortest path to the root. The level of an edge $\{x,y\} \in F_n$ is defined as the maximum of the levels of $x$ and $y$. Let $Y_n$ denote the set of leaves of $G_n$.

We define the weight function \(\omega\colon F_n\to (0, \infty)\) as follows. If the level of the edge \(e\) is equal to \(n\), then we set \(\omega(e)\coloneqq 1\). Further, if the level of \(e\) equals \(k\in \{1, \dots, n-1\}\), then we set \(\omega(e)\coloneqq 2^{(n-1)-k}\). By construction of \(\omega\), it follows that if we equip \(Y_n\) with \(d_{\omega}\), then \(Y_n\) is a \(2^n\)-valued ultrametric space with \(\sep Y_n=2\) and \(\diam Y_n= 2^n\).
Because of Proposition~\ref{prop:optimalWeights}, there is a tree \(T=(Y_n, E)\) such that
\[
d_{\bl}(Y_n, T[Y_n])=\inf\big\{ d_{\bl}(Y_n, Y) \, : \, Y \text{ weighted tree}\big\}.
\]
From the proof of Proposition~\ref{prop:2^n-valued}, it follows that \(d_{\bl}(Y_n, T[Y_n]) < 4\). Thus, given \(x\), \(x'\in Y_n\), the path \([x,x']_T\) is contained in \(B(x, 2^k)\), where \(2^k=d_\omega(x,x')\). Indeed, if there exists $x_i\in [x,x']_T \setminus B(x,2^k)$, then $d_T(x,x') \geq d_\omega(x,x_i) + d_\omega(x_i,x')\geq 2\cdot 2^{k+1}=4 d_\omega(x,x')$.

As a consequence, if $\{x,x'\}\in E$ and $d_\omega(x,x')> 2^k$, then there exists no other $\{y,y'\}\in E$ with $y\in B(x,2^k)$ and $y'\in B(x',2^k)$. In particular, there exist precisely two edges in $T$ between the leaves of the three binary trees $B_{n-1}\subset G_n$. Let $x_1$, $y_1\in Y_n$ be the unique pair of vertices in the two copies of $X_{n-1}$ that are connected to vertices in the third copy of $X_{n-1}$. In particular, $d_T(x_1,y_1) \geq 2^{n+1}.$ Recursively, given the points $x_k,y_k$, define $x_{k+1}$ to be the unique point in $S(x_k, 2^{n-k})$ that is connected in $T$ to a point in $B(x_k, 2^{n-k-1})$. Define $y_{k+1}$ analogously. It follows that $d_\omega(x_n,y_n) = 2^n$, while
\[
d_T(x_n,y_n)\geq 2^{n+1}+ 2\sum_{i=1}^{n-1} 2^i= 4\cdot 2^n - 4 = 4\Big(1-\frac{1}{2^n}\Big) d_\omega(x,x').
\]
Consequently, \(d_{\bl}(T[X_n],X_n)\geq 4 \big(1-2^{-n}\big)\). As \(n\geq 2\) was arbitrary, this implies that the constant \(4\) appearing in Proposition~\ref{prop:2^n-valued} is sharp. 
\end{example}

We conclude this section by proving Corollaries~\ref{cor:biLip}, \ref{cor:main2} and \ref{cor:finite-dist-set}.

\begin{proof}[Proof of Corollary~\ref{cor:biLip}]
By Theorem~\ref{thm:main1} there is a dense subset \(Z\subset X\) and a tree \(T=(Z,E)\), such that the identity map \(\id\colon (Z,d)\to (Z, d_T)\) is \(8c\)-bilipschitz. 
Let \((Y, e)\) denote the injective hull of \((Z, d_T)\).  Proposition~\ref{prop:dress-1} tells us that \(Y\) is a complete \(\R\)-tree. Since \(Y\) is complete and \(Z\subset X\) is a dense subset, the continuous extension of \(e\circ \id\) to \(X\) is an \(8c\)-bilipschitz map which, by abuse of notation, we denote by \(e\colon X\to Y\). 

We claim that \(\sing Y\subset e(X)\).
Since \(Y\) is uniquely geodesic, for every vertex \(z_i\in [z,z']_T\) on the path in the tree $T$, one has \(e(z_i)\in [e(z), e(z')]_Y\). By combining this fact with Lemma~\ref{lem:convex-hull}, it follows that
\[
Y_0\coloneqq \bigcup_{\{z,z'\}\in E} [e(z), e(z')]_Y
\]
is a convex subset of \(Y\) containing \(e(Z)\). In the following, given \(A\subset Y\) we write \(A^-\) to denote the closure of \(A\) in \(Y\). 
Notice that \(Y_0^-=Y_0\cup e(Z)^-\). Since \(Z\subset X\) is a dense subset and \(X\) is complete, we obtain \(e(Z)^-=e(X)\) and thereby \(Y_0^-=Y_0\cup e(X)\). Thus, as \(Y_0^-\subset Y\) is an injective metric space containing \(e(Z)=e(T[Z])\), we have \(Y=Y_0^-=Y_0\cup e(X)\). 
Next, we fix $y\in Y_0\setminus e(X)$ and show that $y\notin \sing Y$. If $y\in e(Z)^-$, then $y\in e(X)$, since $X$ is complete. So, if $y\notin e(Z)^-$, then there exists $r>0$ such that $U_{Y_0}(y,r)$ is isometric to an open interval and does not intersect $e(Z)$. Since $Y$ is the completion of $Y_0$, this means that $U_Y(y,r)$ is also isometric to an open interval and therefore, $y\notin \sing Y$. We conclude that \(\sing Y \subset e(X)\), as was to be shown. 
\end{proof}
 
\begin{proof}[Proof of Corollary~\ref{cor:main2}]
We may suppose that \(Z\) is non-empty. By virtue of Corollary~\ref{cor:biLip}, there is an \(8c\)-bilipschitz map \(e\colon Z\to W\) into a complete \(\R\)-tree \(W\), such that \(\sing W\subset e(Z)\). By scaling (if necessary), we may assume that \(\Lip(e)=8c\) and \(\Lip(e^{-1})=1\), where \(e^{-1}\colon e(Z)\to Z\) denotes the inverse function of \(e\). Since \(W\) is injective, there exists an \(8c\)-Lipschitz map \(\bar{e}\colon X\to W\) such that \(\bar{e}(z)=e(z)\) for all \(z\in Z\). Therefore, if every function \(g\in \Lip_1(e(Z), Y)\) admits a \(\lambda\)-Lipschitz extension \(\bar{g}\colon W\to Y\), then every \(f\in \Lip_1(Z, Y)\) admits an \(8c \lambda\)-extension \(\bar{f}\colon X\to Y\). Indeed, by setting \(g\coloneqq f\circ e^{-1}\), which is a \(1\)-Lipschitz map and thus admits a \(\lambda\)-Lipschitz extension \(\bar{g}\colon W\to Y\), we obtain that \(\bar{f}\coloneqq \bar{g}\circ\bar{e}\) is an \(8c\lambda\)-Lipschitz extension of \(f\), as desired. 

To finish the proof, we must show that every \(g\in \Lip_1(e(Z), Y)\) admits a \(\lambda\)-Lipschitz extension \(\bar{g}\colon W\to Y\).
Since \(\sing W \subset e(Z)\) and \(e(Z)\subset W\) is a closed subset, every connected component \(W_\alpha\) of \(W\setminus e(Z)\) is isometric to a non-empty open interval \(I_\alpha\subset \R\) via an isometry \(\iota_\alpha\colon I_\alpha \to W_\alpha\). Since every \(w\in W_\alpha\) is a regular point, it follows that \(I_\alpha\) cannot be equal to \(\R\). In particular, it is either a bounded open interval or an open ray. In case it is an open ray, we have \(\partial W_\alpha=\{w_\alpha\}\) and we set \(\bar{g}(w)=g(w_\alpha)\) for all \(w\in W_\alpha\). If \(I_\alpha\) is bounded, then \(\partial W_\alpha=\{w_\alpha^-, w_\alpha^+\}\) with \(w_\alpha^-\neq w_\alpha^+\). Using that \(Y\) is \(\lambda\)-quasiconvex, we can choose for every \(\alpha\) a constant-speed curve \(\sigma_\alpha\colon I_\alpha \to Y\) connecting \(g(w_\alpha^-)\) and \(g(w_\alpha^+)\), such that \(L(\sigma_\alpha)\leq \lambda \, d(g(w_\alpha^-), g(w_\alpha^+))\). 

Now, if \(w=\iota_\alpha(t)\) for some \(t\in I_\alpha\), then we set \(\bar{g}(w)\coloneqq \sigma_\alpha(t)\), and if \(w\in e(Z)\), then we set \(\bar{g}(w)\coloneqq g(w)\). Clearly, \(\bar{g}|_{W_\alpha}\) is \(\lambda\)-Lipschitz. Since \(W\) is an \(\R\)-tree, for every pair of points \(w_\alpha\in W_\alpha\), \(w_\beta\in W_\beta\) with \(\alpha\neq \beta\), there exist \(w^\prime_\alpha \in \partial W_\alpha\), \(w^{\prime}_\beta\in \partial W_\beta\), such that \(d(w_\alpha, w_\beta)=d(w_\alpha, w^\prime_\alpha)+d(w^\prime_\alpha, w^{\prime}_\beta)+d(w^{\prime}_\beta, w_\beta)\). Hence, a short calculation shows that \(\bar{g}\) is \(\lambda\)-Lipschitz, as was to be shown.
\end{proof}

\begin{proof}[Proof of Corollary~\ref{cor:finite-dist-set}]
Due to Corolary~\ref{cor:main2} it suffices to show that \(Z\) has Nagata dimension zero with constant \(2^{n-2}\). In the following, we suppose that \(n\geq 3\). The cases \(n=1\) and \(n=2\) can easily be treated directly. The following construction is due to Fefferman (see \cite[Lemma 3.2]{MR2233850}). Fix \(s>0\). We may suppose that \(s\geq \sep Z\).  Since $0$, $\sep Z\in \{d(z,z') : z,z' \in Z\} \setminus (s,\infty)$, there exists \(\ell_0\in \{0, \dots, n-2\}\) such that 
\[
\big\{ d(z,z') : z,z'\in Z\big\}\cap (2^{\ell_0} s, 2^{\ell_0+1} s]=\varnothing
\]
by the pigeonhole principle.
By construction, it follows that the relation 
\[
R=\big\{ (z,z') : d(z,z')\leq  2^{\ell_0} s\big\}
\]
is an equivalence relation on \(Z\). Clearly, \(\diam\, [z]\leq 2^{\ell_0} s\leq 2^{n-2} s\) and \(d([z], [z'])>2^{\ell_0+1} s> s\) whenever \([z]\neq [z']\). Since \(s\geq \sep Z\) was arbitrary, this implies that \(Z\) has Nagata dimension zero with constant \(2^{n-2}\), as desired.
\end{proof}

\section{An optimal lower bound for finite ultrametric spaces}\label{sec:lower-bound}

The aim of this section is to prove Theorem \ref{thm:lowerBound} from the introduction. Let \((X_n,d)\) denote the leaves of the full binary tree of height $n$. Notice that $X_n$ can be realized as $\{0,1\}^n$ with distance function defined by \[
d((x_1,\dots,x_n),(x_1^\prime,\dots, x_n^\prime)) \coloneqq  2\max\{k : x_k \neq x_k^\prime\}.
\]
Moreover, \(\diam(X_n)=2n\) and if \(B=B(x, 2m)\subset X_n\) for \(m\in \{0,\dots n\}\), then \(B\) is isometric to \(X_{m}\). 
It is easy to check that for every ball \(B=B(x, 2s)\subset X_n\), one has \(B=B(x', 2s)\) for every point \(x'\in B\). Hence, \(X_n\) is an ultrametric space for every \(n\geq 1\). 

Given a tree \(T=(X_n, E)\) we let \(D(X_n\to T)\) denote the smallest real number \(D\geq 0\) such that \(d_T(x,x')\leq D d(x,x')\) for all \(x, x'\in X_n\). Throughout this section, the metric \(d_T=d_\omega\) is the shortest-path metric of the weighted tree \((T, \omega)\) with \(\omega\coloneqq d|_E\); see \eqref{eq:weight-function-1} for the definition of \(d_\omega\). 
We put
$$
K_\infty\coloneqq  \sup_{n\geq 1} \, \,\inf_{\substack{\\[0.088em] T=(X_n, E) \text{ tree}}} \,D(X_n\to T)
$$
and we say that a tree $T=(X_n, E)$ is \textit{$s$-admissible} if $n\geq s$ and $D(X_n\to T) \leq K_\infty$. Recall that the \textit{radius} of a metric space \(X\) is defined as 
\[
\rad\,(X)\coloneqq \inf_{\substack{\\[0.088em]x\in X}} \, \sup_{x'\in X}\,  d(x,x').
\] 
Our main tool in the proof of Theorem \ref{thm:lowerBound} is the following notion:
\begin{definition}\label{def:rad}Let \(\rad\colon (0,\infty)\to [0, \infty)\) be defined by
\begin{align*}
\rad(s)=\inf\bigl\{ \rad\, (B\subset T[X_n])\bigr\},
\end{align*}
where the infimum is taken over all \(s\)-admissible trees \(T=(X_n, E)\) and subsets \(B=B(x,2s) \subset X_n\). Here, \(B\subset T[X_n]\) means that \(B\) is considered as a metric subspace of \(T[X_n]\) equipped with the subspace metric. The metric space \(T[X_n]=(X_n, d_{T})\) is defined in Section~\ref{sec:GraphTheory}.
\end{definition}
By Proposition~\ref{prop:optimalWeights}, 
\begin{equation}\label{eq:reduction-top-1}
K_\infty=\sup_{n\geq 1} \, \,\inf_{\substack{\\[0.088em] Y \text{ weighted tree}}}\, d_{\bl}(X_n, Y);
\end{equation}
thus, to prove Theorem~\ref{thm:lowerBound} it suffices to show that \(K_\infty\geq 8\). On the one hand, by definition,
\begin{equation}\label{eq:upperBound1}
\frac{\rad(s)}{2s}\leq K_\infty
\end{equation}
for every $s>0$, but on the other hand, we will see that if \(K_\infty <8\) then 
\(s\mapsto \rad(s)\cdot (2s)^{-1}\) is unbounded, which contradicts \eqref{eq:upperBound1}. Hence, it seems worthwhile to have tools to obtain lower bounds  on \(\rad(s)\). The following notion will prove very useful for this purpose.

\begin{definition}\label{def:conn}Let \(\Conn(s) \subset [0,1)\) denote the subset consisting of all $\alpha\in [0,1)$, such that for every $s$-admissible tree $T=(X_n, E)$ and every pair of points $o,x\in X_n$ with $x\in \min_{\preceq_{o, T}} S(o, 2\lceil s \rceil)$, one has 
\[
B(x, 2\alpha s) \subset \up_{\preceq_{o, T}}[x].
\]
The partial order \(\preceq_{o, T}\) on \(X_n\) is introduced in Section \ref{sec:PartialOrder} and the sphere \(S(o, 2\lceil s \rceil)\subset X_n\) is defined in \eqref{eq:defBall}.
\end{definition}

\begin{lemma}\label{lem:basecase}
Let \(0\leq \alpha <1/4\,\) be a real number. Then \(\alpha\in\Conn(s)\) for every \(s>0\). 
\end{lemma}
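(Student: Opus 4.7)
The plan is to argue by contradiction. Suppose for some \(s > 0\) the conclusion fails: there exist an \(s\)-admissible tree \(T = (X_n, E)\), points \(o, x \in X_n\) with \(x \in \min_{\preceq_{o,T}} S(o, 2m)\), where \(m \coloneqq \lceil s \rceil\), and a point \(y \in B(x, 2\alpha s)\) with \(y \notin \up_{\preceq_{o, T}}[x]\). Since \(x \in \up_{\preceq_{o, T}}[x]\), I may assume \(y \neq x\), and my goal is to squeeze \(d_T(x, y)\) between a lower bound \(\geq 4m\) and an upper bound \(< 4m\).

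First I locate \(y\) in the ultrametric \(X_n\). Because \(\alpha < 1/4\) and \(s \leq m\), I have \(d(x, y) \leq 2\alpha s < s/2 < m < 2m = d(o, x)\). The ultrametric inequality applied to \(\{o, x, y\}\) then forces \(d(o, y) = d(o, x) = 2m\), so \(y \in S(o, 2m)\). The hypothesis \(y \notin \up_{\preceq_{o, T}}[x]\) reads \(x \not\preceq_{o, T} y\), while the minimality of \(x\) in \(S(o, 2m)\) rules out \(y \preceq_{o, T} x\); hence \(x\) and \(y\) are incomparable in \(\preceq_{o, T}\).

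Next, let \(q \in X_n\) be the median of \(o, x, y\) in \(T\). Incomparability of \(x, y\) yields \(q \neq x\), \(q \neq y\), and by the tripod property \(q\) lies on each of \([o, x]_T\), \([o, y]_T\) and \([x, y]_T\). Since \(q \in [o, x]_T \setminus \{x\}\), minimality of \(x\) forces \(q \notin S(o, 2m)\), that is, \(d(o, q) \neq 2m\). The ultrametric inequality applied to \(\{o, q, x\}\) says that the two larger of its pairwise distances coincide, and since \(d(o, q) \neq d(o, x) = 2m\) this forces \(d(q, x) \geq 2m\) (equal to \(2m\) if \(d(o, q) < 2m\), to \(d(o, q)\) otherwise). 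An identical argument on \(\{o, q, y\}\) gives \(d(q, y) \geq 2m\). Combining \(d_T \geq d\) with the tripod identity \(d_T(x, y) = d_T(x, q) + d_T(q, y)\), I obtain \(d_T(x, y) \geq 4m\).

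Finally, \(s\)-admissibility gives \(d_T(x, y) \leq K_\infty \cdot d(x, y) \leq K_\infty \cdot 2\alpha s\). Theorem~\ref{thm:main1} applied to the ultrametric space \(X_n\) (which has Nagata dimension zero with \(c = 1\)) yields \(K_\infty \leq 8\), so \(d_T(x, y) \leq 16\alpha s < 4s \leq 4m\), contradicting the preceding lower bound. The key step is recognizing that ultrametric rigidity combined with \(q \notin S(o, 2m)\) forces both \(d(q, x) \geq 2m\) and \(d(q, y) \geq 2m\); once this observation is in place, the remaining distortion computation is essentially one line, and I do not anticipate further obstacles.
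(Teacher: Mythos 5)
Your proof is correct and follows essentially the same route as the paper: derive the lower bound \(d_T(x,y)\geq 4\lceil s\rceil\) and contradict \(K_\infty\leq 8\) from Theorem~\ref{thm:main1}. The paper simply asserts that \([x,y]_T\) contains two edges of length \(\geq 2\lceil s\rceil\), whereas you justify the equivalent bound via the median of \(o,x,y\) and the ultrametric isosceles property; this is a welcome filling-in of detail rather than a different argument.
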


\begin{proof}
For the sake of a contradiction, suppose there exists \(\alpha\in (0, 1/4)\) such that \(\alpha\notin \Conn(s)\) for some \(s>0\). By definition of \(\Conn(s)\), there exist an \(s\)-admissible tree $T=(X_n, E)$, and points $o\in X_n$, $x \in \min_{\preceq_{o, T}} S(o,2\lceil s\rceil)$ and $y \in B(x, 2\alpha s)$, such that $y \notin \up_{\preceq_{o, T}}[x]$. By construction, the path \([x,y]_T\) contains at least two edges of length $\geq 2\lceil s \rceil$, and so \(d_T(x,y) \geq 4\lceil s \rceil\). 
Since $y\in B(x,2\alpha s)$,
$$
K_\infty \geq \frac{d_T(x,y)}{d(x,y)} \geq \frac{4 \lceil s \rceil}{2\alpha s} >8.
$$
But by Theorem~\ref{thm:main1}, \(K_\infty \leq 8\). This is a contradiction and thus $\alpha\in \Conn(s)$, as desired.
\end{proof}

There is the following interplay between \(\Conn(s)\) and \(\rad(s)\):

\begin{lemma}
\label{lem:upperBound:i=0}
Let \(\alpha\in [0,1)\) and \(s_0>0\) be real numbers. If $\alpha\in \Conn(s)$ for all $s\geq s_0$, then for every $\varepsilon>0$, there exists $s_1\geq s_0$ such that
$$
\rad(s) \geq 2s\, \bigg(\frac{1}{1-\alpha}-\varepsilon\bigg)
$$
for all $s\geq s_1$.
\end{lemma}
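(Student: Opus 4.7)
The strategy is to iterate the hypothesis $\alpha\in\Conn(s)$ in order to construct, for any input to the definition of $\rad(s)$, a long chain of points inside $B$ along which the tree distances are additive and grow geometrically.

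Fix an integer $s\geq s_0$, an $s$-admissible tree $T=(X_n,E)$, a ball $B=B(x_0,2s)\subset X_n$, and a candidate center $o\in B$. Define the decreasing integer sequence $s^{(0)}:=s$ and $s^{(k)}:=\lfloor\alpha s^{(k-1)}\rfloor$, and let $K$ denote the largest index with $s^{(K-1)}\geq s_0$. Setting $y_0:=o$, I would inductively pick $y_{k+1}\in\min_{\preceq_{y_k,T}}S(y_k,2s^{(k)})$, which is legitimate because $T$ is also $s^{(k)}$-admissible. Applying $\alpha\in\Conn(s^{(k)})$ to $T$ with root $y_k$ yields
\[
B(y_{k+1},2\alpha s^{(k)})\subset\up_{\preceq_{y_k,T}}[y_{k+1}].
\]
Since $d(y_{k+1},y_{k+2})=2s^{(k+1)}\leq 2\alpha s^{(k)}$, the point $y_{k+2}$ lies in this ball, so $y_{k+1}\in[y_k,y_{k+2}]_T$ for every $k$.

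The key coherence step is the claim that the entire chain lies on a single $T$-geodesic emanating from $y_0$, i.e.\ $y_k\in[y_0,y_j]_T$ for every $j\geq k$. This I would prove by induction on $k$: both sets $\up_{\preceq_{y_k,T}}[y_{k+1}]$ and $\up_{\preceq_{y_0,T}}[y_{k+1}]$ coincide with the connected component of $T\setminus\{y_{k+1}\}$ that does \emph{not} contain the common ancestor side of $y_0$ and $y_k$; and $y_0$ and $y_k$ do lie in the same component of $T\setminus\{y_{k+1}\}$ by the inductive hypothesis $y_k\in[y_0,y_{k+1}]_T$. Hence the inclusion above upgrades to $y_{k+2}\in\up_{\preceq_{y_0,T}}[y_{k+1}]$, and iterating gives $y_k\in[y_0,y_j]_T$ for all $j\geq k$. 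With this in hand the distances telescope, and using $d_T\geq d$ one obtains
\[
d_T(y_0,y_K)=\sum_{k=0}^{K-1}d_T(y_k,y_{k+1})\geq 2\sum_{k=0}^{K-1}s^{(k)}.
\]
The ultrametric property of $X_n$ together with $s^{(k)}\leq s$ ensures $y_j\in B$ for every $j$, so $\sup_{y\in B}d_T(o,y)\geq 2\sum_{k=0}^{K-1}s^{(k)}$. Infimizing first over $o\in B$ and then over $(T,B)$ yields
\[
\rad(s)\geq 2\sum_{k=0}^{K-1}s^{(k)}.
\]

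To finish, I would unroll the rounding to obtain $s^{(k)}\geq\alpha^k s-(1-\alpha)^{-1}$ and note that $K\to\infty$ as $s\to\infty$, so $s^{-1}\sum_{k=0}^{K-1}s^{(k)}\to(1-\alpha)^{-1}$. This yields the desired bound for every sufficiently large integer $s$. The extension to arbitrary real $s$ follows from the trivial monotonicity $\rad(s)\geq\rad(\lfloor s\rfloor)$, valid because every $s$-admissible tree is $\lfloor s\rfloor$-admissible and because $B(x_0,2s)=B(x_0,2\lfloor s\rfloor)$ in $X_n$; the resulting additive error of order $O(1)$ is absorbed into $\varepsilon$ by enlarging $s_1$. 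The main obstacle I anticipate is the coherence claim above: each application of $\Conn(s^{(k)})$ is relative to the partial order rooted at the \emph{moving} vertex $y_k$, and one must verify that the successive "descendant of $y_{k+1}$" inclusions line up into a single $T$-geodesic from $y_0$. The remaining rounding and limiting estimates are routine quantitative bookkeeping.
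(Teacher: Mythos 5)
Your argument is correct and is essentially the paper's argument: both iterate the hypothesis $\alpha\in\Conn(\cdot)$ at the geometrically decreasing scales $s,\alpha s,\alpha^2 s,\dots$ and sum the resulting series $2s(1+\alpha+\alpha^2+\cdots)\to 2s/(1-\alpha)$, with the same rounding and tail bookkeeping. The only difference is presentational: the paper packages the iteration as the one-step recursion $\rad(s)\geq 2\lfloor s\rfloor+\rad(\alpha\lfloor s\rfloor)$, which at each level only needs the two-point additivity $d_T(o,\bar x)=d_T(o,x)+d_T(x,\bar x)$ coming from $x\preceq_{o,T}\bar x$, whereas you unroll the recursion inside a single tree and therefore need --- and correctly supply --- the extra coherence step showing the whole chain lies on one $T$-geodesic.
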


\begin{proof}
Fix \(s>s_0\) such that \(\lfloor s \rfloor \geq s_0\). Let $T=(X_n, E)$ be an $s$-admissible tree. Let \(o\in X_n\) and let $x\in \min_{\preceq_{o,T}} S(o,2\lfloor s\rfloor)$. Since \(\alpha \in \Conn(\lfloor s\rfloor)\), we have  $B(x,2\alpha  \lfloor s\rfloor )\subset \up_{\preceq_{ o,T}}[x]$. By definition of $\rad$, there exists $\bar{x}\in B(x,2\alpha \lfloor s\rfloor )$ with $d_T(x,\bar{x}) \geq \rad(\alpha  \lfloor s\rfloor)$, and so
\begin{equation*}
d_T(o, \bar{x})=d_T(o, x)+d_T(x, \bar{x}) \geq 2 \lfloor s\rfloor  + \rad(\alpha  \lfloor s \rfloor ).
\end{equation*}
Hence, since 
\[
\rad(s)=\inf\limits_{T=(X_n, E) \text{ \(s\)-admissible} } \hspace{0.75em} \min_{o\in X_n} \hspace{0.5em}\max_{x\in B(o, 2s)} d_T(o, x) 
\]
it follows that
\begin{equation}\label{eq:rad1}
\rad(s) \geq 2 \lfloor s\rfloor  + \rad(\alpha  \lfloor s \rfloor ).
\end{equation}
By using \eqref{eq:rad1} repeatedly, we find by means of the geometric series that if \(s>\alpha^{-N} s_0\) for some \(N\geq 1\) sufficiently large, then
$$
\rad(s) \geq 2s\, \bigg(\frac{1}{1-\alpha}-\varepsilon\bigg),
$$
which is the asserted inequality. 
\end{proof}

Now we are in a position to prove Theorem \ref{thm:lowerBound}.

\begin{proof}[Proof of Theorem \ref{thm:lowerBound}]
For the sake of a contradiction, suppose that $K_\infty<8$. Let $\varepsilon_0>0$ and $r>1$ be real numbers such that
\begin{equation}
\label{eq:upperBound2}
rK_\infty <8-2\varepsilon_0.
\end{equation}

Further, fix $1/5< \alpha_0 <1/4$ and choose a sequence \((\alpha_i)\subset (0,1)\) satisfying $\alpha_{i-1}< \alpha_i\leq r \alpha_{i-1}$ for all $i\geq 1$ and $\alpha_{i} \to 1$ as $i\to \infty$. We will prove the following claim by induction on \(i\):

\begin{claim}\label{claim:1}
For every $i\geq 0$ there exists $s_i>0$ such that for all $s\geq s_i$,
\begin{enumerate}
\item\label{it:claim1} $\alpha_i\in \Conn(s)$,
\item\label{it:claim12} $\rad(s) \geq 2s\left(\frac{1}{1-\alpha_i}-\varepsilon_0\right)$.
\end{enumerate}
\end{claim}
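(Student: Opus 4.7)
The plan is to prove Claim~\ref{claim:1} by induction on $i$, deducing (2)$_i$ from (1)$_i$ at each step via Lemma~\ref{lem:upperBound:i=0} (applied with $\alpha = \alpha_i$ and $\varepsilon = \varepsilon_0$). For the base case $i=0$, since $\alpha_0 < 1/4$ by construction, Lemma~\ref{lem:basecase} gives $\alpha_0 \in \Conn(s)$ for every $s > 0$, and Lemma~\ref{lem:upperBound:i=0} then produces $s_0 > 0$ such that (2)$_0$ holds for all $s \geq s_0$.

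\medskip

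For the inductive step, assume (1)$_{i-1}$ and (2)$_{i-1}$ hold with threshold $s_{i-1}$. We prove (1)$_i$ by contradiction: suppose that, for arbitrarily large $s$, there exist an $s$-admissible tree $T = (X_n, E)$, points $o \in X_n$, $x \in \min_{\preceq_{o,T}} S(o, 2\lceil s\rceil)$, and $y \in B(x, 2\alpha_i s)$ with $y \notin \up_{\preceq_{o,T}}[x]$; let $z$ be the vertex on $[o,x]_T$ adjacent to $x$. The argument rests on three ingredients. \emph{First}, repeating the ultrametric computation of Lemma~\ref{lem:basecase}, the minimality of $x$ forces $d(x, z) \geq 2\lceil s\rceil$, and the same bound holds for the parent edge of any minimal ancestor $y^{*} \preceq_{o,T} y$ of $y$ in $S(o, 2\lceil s\rceil)$. \emph{Second}, by (1)$_{i-1}$ we have $B(x, 2\alpha_{i-1} s) \subset \up_{\preceq_{o,T}}[x]$, which forces $d(x, y) > 2\alpha_{i-1} s$; combined with $d(x,y) < 2\lceil s\rceil \leq d(x,z)$ and the ultrametric property this yields $d(z, y) = d(x, z)$, and hence $d_T(x, y) \geq 2 d(x, z) \geq 4\lceil s\rceil$. \emph{Third}, by (2)$_{i-1}$ applied at scale $\alpha_{i-1} s$, the ball $B(x, 2\alpha_{i-1} s)$ has $d_T$-radius at least $2\alpha_{i-1} s\bigl(\tfrac{1}{1-\alpha_{i-1}}-\varepsilon_0\bigr)$, so there exists $q \in B(x, 2\alpha_{i-1} s) \subset \up_{\preceq_{o,T}}[x]$ with $d_T(x, q) \geq 2\alpha_{i-1} s\bigl(\tfrac{1}{1-\alpha_{i-1}}-\varepsilon_0\bigr)$; symmetrically on the $y$-side we obtain $q' \in B(y^{*}, 2\alpha_{i-1} s) \subset \up_{\preceq_{o,T}}[y^{*}]$ with the analogous bound on $d_T(y^{*}, q')$.

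\medskip

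The crucial step is to exhibit a pair of points whose $d_T$-to-$d$ ratio violates the standing assumption $rK_\infty < 8 - 2\varepsilon_0$. Since $q \in \up_{\preceq_{o,T}}[x]$ but $y$ is not, the tree path $[q, y]_T$ passes through $x$ and the long edge $\{x, z\}$, so $d_T(q, y) = d_T(q, x) + d_T(x, y) \geq 2\alpha_{i-1} s\bigl(\tfrac{1}{1-\alpha_{i-1}}-\varepsilon_0\bigr) + 4\lceil s\rceil$, while the ultrametric property yields $d(q, y) = d(x, y) \leq 2\alpha_i s$ (because $d(q, x) < d(x, y)$). A fully symmetric lower bound arises from the pair $(q, q')$, whose tree path traverses both long parent edges together with both depth contributions. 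Combining these with the direct estimate $K_\infty \geq 2\lceil s\rceil/(\alpha_i s)$ coming from $(x, y)$, and invoking $\alpha_i \leq r\alpha_{i-1}$ together with $rK_\infty < 8 - 2\varepsilon_0$, yields the desired contradiction once $s$ is taken large enough (so that $\alpha_{i-1} s \geq s_{i-1}$ and the $\lceil s\rceil/s$ rounding is negligible). This proves (1)$_i$; absorbing the largeness constraints into a threshold $s_i \geq s_{i-1}$, Lemma~\ref{lem:upperBound:i=0} then delivers (2)$_i$.

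\medskip

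The main obstacle is precisely the book-keeping in this concluding step: one must verify that the maximum of the lower bounds on $K_\infty$ supplied by the pairs $(x,y)$, $(q, y)$, and $(q, q')$ strictly exceeds $(8-2\varepsilon_0)/r$ for \emph{every} value of $\alpha_{i-1}$ that the sequence $(\alpha_i)$ encounters on its way to $1$, not merely for small $\alpha_{i-1}$ (where the direct bound suffices) or for $\alpha_{i-1}$ close to $1$ (where the symmetric bound from $(q, q')$ suffices). Tight tracking of the rounding error $\lceil s\rceil/s \to 1$ and, where needed, a careful selection of the pair tailored to each range of $\alpha_{i-1}$, form the technical heart of the inductive step.
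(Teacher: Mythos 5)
Your overall strategy is the paper's: induct on $i$, get \eqref{it:claim12} from \eqref{it:claim1} via Lemma~\ref{lem:upperBound:i=0}, and prove \eqref{it:claim1} by contradiction using the two long edges forced by minimality together with ``deep'' points $q,q'$ supplied by the inductive bound on $\rad$. The base case, the lower bound $d_T(x,y)\geq 4\lceil s\rceil$, and the treatment of the case where $y$ itself lies in $\min_{\preceq_{o,T}}S(o,2\lceil s\rceil)$ (your pair $(q,q')$, the paper's $(\bar x,\bar y)$) are all sound: there the ultrametric inequality gives $d(q,q')\leq 2\alpha_i s$, and the estimate closes because $\tfrac{2}{\alpha}+\tfrac{2}{1-\alpha}\geq 8$ for all $\alpha\in(0,1)$.

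The genuine gap is in the other case, where $y$ is \emph{not} minimal in $S(o,2\lceil s\rceil)$. Your device of replacing $y$ by a minimal ancestor $y^{*}\preceq_{o,T}y$ in $S(o,2\lceil s\rceil)$ does not work: $y^{*}$ lies on the tree path $[o,y]_T$, but its \emph{metric} distance to $y$ (hence to $x$ and to $q$) is uncontrolled — it can be as large as $2\lceil s\rceil$ — so the denominator $d(q,q')$ is no longer $\leq 2\alpha_i s$ and the symmetric pair yields nothing. This matters because the symmetric pair is the \emph{only} one of your three pairs whose bound reaches $8$ uniformly in $\alpha_{i-1}$: the pair $(x,y)$ gives only about $\tfrac{1}{r}\cdot\tfrac{2}{\alpha_{i-1}}$, and $(q,y)$ gives about $\tfrac{1}{r}\bigl(\tfrac{1}{1-\alpha_{i-1}}+\tfrac{2}{\alpha_{i-1}}\bigr)$, whose minimum over $\alpha\in(0,1)$ is $3+2\sqrt2\approx 5.83<8$; so no ``selection of pairs tailored to each range of $\alpha_{i-1}$'' among these can rescue the argument near $\alpha_{i-1}=1/2$. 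The paper resolves this case differently: it first renormalizes $y$ to $\min_{\preceq_{o,T}}\bigl([o,y]_T\cap B(y,2\alpha_i s)\bigr)$, sets $p\coloneqq\max_{\preceq_{o,T}}\bigl([o,y]_T\cap S(o,2\lceil s\rceil)\setminus y\bigr)$ and $2\beta s\coloneqq d(p,y)$ with $\alpha_i<\beta<\lceil s\rceil/s$, and applies the inductive hypothesis to $y$ as a minimal point of the \emph{rescaled, recentered} sphere $S(p,2\beta s)$; the deep point $\bar y$ then lies in $B(y,2\beta\alpha_{i-1}s)$, still within $2\alpha_i s$ of $x$, and the extra run $d_T(p,y)\geq 2\beta s$ in the tree distance exactly compensates (using $\beta>\alpha_{i-1}$) so that the same identity $\tfrac{2}{\alpha}+\tfrac{2}{1-\alpha}\geq 8$ closes the estimate. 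You need this recentering idea (or an equivalent) to complete the inductive step.
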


Having Claim~\ref{claim:1} at hand, a contradiction is obtained immediately. Indeed, since $\alpha_i \to 1$ as $i\to \infty$, there exists $i_0$ with $\alpha_{i_0}> 1-\frac{1}{K_{\infty}+\varepsilon_0}$ and due to Claim~\ref{claim:1} for every $s\geq s_{i_0}$,
$$
\frac{\rad(s)}{2s} \geq \biggl(\frac{1}{1-\alpha_{i_0}}-\varepsilon_0\biggr)>K_\infty,
$$
which contradicts \eqref{eq:upperBound1} and so the case when \(K_\infty <8\) is not possible. Hence, by \eqref{eq:reduction-top-1}, for every \(K\in [1,8)\) there exists an integer \(n(K)\geq 1\) with the desired property. To finish the proof, we need to establish Claim \ref{claim:1}.

\textit{Proof of Claim \ref{claim:1}}: 
By Lemma~\ref{lem:basecase}, it follows that \(\alpha_0\in \Conn(s)\) for all \(s>0\). In particular, this is true for all \(s\geq 1\). Hence, by Lemma~\ref{lem:upperBound:i=0}, there exists \(s_0 \geq 1\), such that $\rad(s) \geq 2s\left(\frac{1}{1-\alpha_0}-\varepsilon_0\right)$ for all \(s\geq s_0\). This proves the claim when \(i=0\). So let us suppose that \(i\geq 1\) and Claim \ref{claim:1} holds for $i-1$. We proceed by showing \eqref{it:claim1}. We claim that $\alpha_i\in \Conn(s)$ for every $s\geq \eta s_{i-1}$, where 
\[
\eta\coloneqq \frac{1}{(1-\alpha_i)\alpha_{i-1}^2}.
\] 
Suppose that this is not the case for some \(s\geq \eta s_{i-1}\). Thus, there exist an $s$-admissible tree $T=(X_n, E)$, and points $o\in X_n$, $x\in \min_{\preceq_{o, T}}S(o,2\lceil s \rceil)$ and $y \in B(x, 2\alpha_i s)$, such that $y \notin \up_{\preceq_{o, T}}[x]$. As \(s\geq \frac{1}{1-\alpha_i}\), we have \(y\in S(o,2\lceil s \rceil)\). We may suppose that
\begin{equation}\label{eq:besty}
y =  \min_{\preceq_{o, T}} \big([o,y]_T \cap  B(y, 2\alpha_i s)\big).
\end{equation}
Now, there are two cases: either \(y\in \min_{\preceq_{o, T}}S(o,2\lceil s \rceil)\) or the maximum
\begin{equation}\label{eq:bestp}
p\coloneqq \max_{\preceq_{o, T}} \big([0,y]_T \cap S(o, 2\lceil s \rceil)\setminus y \big) 
\end{equation}
exists.
To begin, we consider the case when the point \(p\) defined in \eqref{eq:bestp} exists. 

We abbreviate \(2\beta s\coloneqq d(p,y)\). 
Using \eqref{eq:besty} and \eqref{eq:bestp}, we find $ 2\alpha_i s < 2\beta s <   2 \lceil s \rceil$ and \(y\in \min_{\preceq_{p, T}}S(p,2 \beta s)\). Since \(\beta \alpha_{i-1}s >s_{i-1}\) and Claim \ref{claim:1} holds for \(i-1\), we have \(B(y,2\alpha_{i-1}(\beta s))\subset \up_{\preceq_{p, T}}[y]=\up_{\preceq_{o, T}}[y]\), so there exists $\bar{y}\in B(y,2\beta \alpha_{i-1}s) \cap \up_{\preceq_{o, T}}[y]$ such that
$$
d_T(y, \bar{y}) \geq \rad(\beta \alpha_{i-1}s)\geq 2\beta \alpha_{i-1} s \,\biggl(\frac{1}{1-\alpha_{i-1}}-\varepsilon_0\biggr).
$$
Moreover, since $\alpha_{i-1}s> s_{i-1}$ and  $x\in \min_{\preceq_{o, T}}S(o,2\lceil s \rceil)$, we have $B(x,2\alpha_{i-1} s)\subset \up_{\preceq_{o, T}}[x] $, so there exists $\bar{x}\in \up_{\preceq_{o, T}}[x] \cap B(x,2\alpha_{i-1}s)$ such that
$$
d_T(x, \bar{x})\geq \rad(\alpha_{i-1} s)\geq 2\alpha_{i-1}s\,\biggl( \frac{1}{1-\alpha_{i-1}}-\varepsilon_0\biggr).
$$
Using that the path \([x,p]_T\) contains at least two edges of length $\geq 2\lceil s \rceil$, we find 
\begin{align*}
d_T(\bar{x}, \bar{y}) &\geq d_T(\bar{x},x) + d_T(x,p) + d_T(p,y) + d_T(y, \bar{y}) \\
&\geq \rad(\alpha_{i-1} s) +4\lceil s \rceil+2\beta s+\rad(\beta \alpha_{i-1}s).
\end{align*}
Since \(d(\bar{x},\bar{y}) \leq 2\alpha_i s\), we arrive at
\begin{align*}
K_\infty &\geq \frac{d_T(\bar{x},\bar{y})}{d(\bar{x},\bar{y})} \geq \frac{\alpha_{i-1}}{\alpha_i}\bigg( \frac{2}{\alpha_{i-1}}+\frac{\rad(\alpha_{i-1}s)}{2\alpha_{i-1}s}+\beta\Big(\frac{1}{\alpha_{i-1}}+\frac{\rad(\beta \alpha_{i-1}s)}{2\beta \alpha_{i-1}s} \Big)\bigg) 
\end{align*}
and so, as \(\beta >\alpha_{i-1}\), we obtain
\begin{align*}
K_\infty &\geq \frac{1}{r}\bigg( \frac{2}{\alpha_{i-1}}+\frac{1}{1-\alpha_{i-1}}+\alpha_{i-1}\Big(\frac{1}{\alpha_{i-1}}+\frac{1}{1-\alpha_{i-1}} \Big)\bigg)-\frac{\varepsilon_0}{r}(1+\beta).
\end{align*}
Therefore,
\[K_\infty \geq \frac{8}{r}-\frac{2\varepsilon_0}{r}>K_\infty,\]
where we have used \eqref{eq:upperBound2} in the last inequality. This is a contradiction.

It remains to consider the case where $y\in \min_{\preceq_{o, T}}S(o,2\lceil s \rceil)$. Since \(s>s_{i-1}\), there exist $\bar{x}\in B(x,2\alpha_{i-1}s)\subset  \up_{\preceq_{o, T}}[x]$ and $\bar{y}\in B(y,2\alpha_{i-1}s)\subset  \up_{\preceq_{o, T}}[y]$, such that \(d_T(x, \bar{x})\geq \rad(\alpha_{i-1} s)\) and \(d_T(y,\bar{y}) \geq \rad(\alpha_{i-1} s)\).
Consequently, 
\begin{align}\label{eq:estimate1}
K_\infty &\geq \frac{d_T(\bar{x},\bar{y})}{d(\bar{x},\bar{y})}\geq \frac{d_T(\bar{x},x) + d_T(x,y) + d_T(y, \bar{y})}{2\alpha_i s} \nonumber\\
&\geq \frac{\alpha_{i-1}}{\alpha_i}\Big(\frac{\rad(\alpha_{i-1} s)}{2\alpha_{i-1} s}+\frac{2}{\alpha_{i-1}}+\frac{\rad(\alpha_{i-1} s)}{2\alpha_{i-1} s}\Big).
\end{align}
As \(\alpha_{i-1}s>s_{i-1}\), \eqref{eq:estimate1} yields
\[
K_\infty \geq \frac{1}{r}\Big( \frac{2}{1-\alpha_{i-1}}+\frac{2}{\alpha_{i-1}}\Big)-\frac{2\varepsilon_0}{r}\geq \frac{8}{r}-\frac{2\varepsilon_0}{r}>K_\infty.
\]
This is a contradiction and so we have shown that $\alpha_i\in \Conn (s)$ whenever \(s\geq \eta s_{i-1}\). Now, by Lemma \ref{lem:upperBound:i=0}, there exists $s_i \geq \eta s_{i-1}$ such that for all $s\geq s_i$,
$$
\rad(s) \geq 2s\, \Bigl(\frac{1}{1-\alpha_i}-\varepsilon_0\Bigr),
$$
establishing Claim~\ref{claim:1}\eqref{it:claim12}. This completes the proof of Claim~\ref{claim:1}.
\end{proof}

\begin{remark}\label{rem:exp-bound}
Let \(K\in [3,8)\). In the following, we briefly outline how to use our proof of Theorem~\ref{thm:lowerBound} to find an explicit upper bound on \(n(K)\) which is exponential in \(K\). By setting \(r\coloneqq 1+(8-K)/2K\) and \(\varepsilon_0\coloneqq 1-K/8\), we have \(r K< 8-2\varepsilon_0\). If \(\alpha_0=2/9\) and \(\alpha_i=r \alpha_{i-1}\), then a short calculation reveals that
\(\alpha_{i_0}>1-1/(K+\varepsilon_0)\), where \(i_0\coloneqq \lceil 21/(8-K) \rceil \). In Lemma~\ref{lem:upperBound:i=0} one can take \(s_1= \bigl(16/\varepsilon\bigr)^{13} \max\{s_0, 144/(7\varepsilon)\}\) for \(\alpha \leq 7/8\) and \(0< \varepsilon < 5/8\), where $\max\{s_0, 144/(7\varepsilon)\}$ is introduced to keep the rounding errors below a factor of $\varepsilon/2$. As \(\eta \leq 28\), by considering the proof of Claim~\ref{claim:1}, one can show that
\[
s_{i_0} \leq \frac{144}{7\varepsilon_0} \Big( 28 \,\Big(\frac{16}{\varepsilon_0}\Big)^{13} \Big)^{i_0}.
\]
This yields the following exponential upper bound on \(n(K)\):
\begin{equation}\label{eq:exponential-upper-bound}
n(K) \leq \exp\Big(\frac{8000}{(8-K)^2}\Big), 
\end{equation}
Notice that \eqref{eq:exponential-upper-bound} clearly also holds when \(K\in [1,3)\).
\end{remark}

\begin{remark}\label{rem:generalization-to-c-bigger-than-zero}
As pointed out in Section~\ref{sec:main-result-and-examples}, a metric space \(X\) has Nagata dimension zero with constant \(c\) if \(\diam\, [x]_s \leq cs\) for all \(x\in X\) and all \(s>0\). 
Instead of considering closed balls, \(\rad(s)\) and \(\Conn(s)\) can also be defined in terms of the equivalence classes \([x]_{2s}\) and \([x]_{2\alpha s}\), respectively. Indeed, if \(X\) is an ultrametric space, then \([x]_s=B(x,s)\) for all \(x\in X\) and all \(s>0\). 
Hence, if we assume the existence of a suitable family \(\{X_n\}\) of finite metric spaces with \(c_N(X_n)=c_\ast>1\), then, using the new definitions of \(\rad(s)\) and \(\Conn(s)\) in terms of \([x]_s\), our proof of Theorem~\ref{thm:lowerBound} can be adapted to show that Theorem~\ref{thm:main1} is sharp when \(c=c_\ast\). However, we do not have a good candidate for such a family of metric spaces. 
\end{remark}

\section{Approximating subsets of \(\R\)-trees by weighted trees}
In this section, we prove Theorem \ref{thm:guptaGen}. Before proceeding with its proof, let us start with the proof of Corollary~\ref{cor:matousek}. 
\begin{proof}[Proof of Corollary~\ref{cor:matousek}]
Using Theorem~\ref{thm:guptaGen}, we obtain an \(8\)-bilipschitz embedding \(e\colon Z\to W\) into a complete \(\R\)-tree \(W\) such that \(\sing W \subset e(Z)\). Now, the proof is exactly parallel to that of Corollary~\ref{cor:main2}. 
\end{proof} 
As discussed in the introduction, the proof of Theorem~\ref{thm:guptaGen} is done in three steps. As a first step, we adapt a construction due to Gupta (see \cite{MR1958411}) to discrete proper 0-hyperbolic spaces.
Recall that a metric space \(X\) is called \textit{discrete} if for every \(x\in X\) there exists \(\varepsilon_x>0\) such that \(U(x, \varepsilon_x)=\{x\}\). 

\begin{proposition}\label{prop:Gupta-for-extreme-points}
Let $X$ be a discrete, proper and 0-hyperbolic metric space such that there are no pairwise distinct points $x,y,z\in X$ satisfying $d(x,y)+d(y,z) = d(x,z)$. Then there exists a weighted tree $T=(X,E, \omega)$ such that
\begin{equation}\label{eq:the-result}
    d(x,x') \leq d_T(x,x') < 8d(x,x')
\end{equation}
for all $x,x'\in  X$.
\end{proposition}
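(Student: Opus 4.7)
The plan is to adapt the Whitney-type construction that Gupta introduced for finite \(0\)-hyperbolic spaces in \cite{MR1958411} to the discrete proper setting, working inside the injective hull of \(X\) as the ambient \(\R\)-tree.

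\textbf{Setup inside the injective hull.} By Proposition~\ref{prop:dress-1}, \(E(X)\) is a complete \(\R\)-tree; I identify \(X\) with its isometric image in \(E(X)\). The non-collinearity hypothesis translates, in the \(\R\)-tree \(E(X)\), to the statement that for any two distinct \(x, x' \in X\) the open geodesic segment \((x, x')_{E(X)}\) is disjoint from \(X\). Since \(X\) is discrete and proper, closed balls in \(X\) are finite, so \(X\) is closed in \(E(X)\) and for every \(p \in E(X)\) the distance \(d(p, X)\) is attained by some point of \(X\).

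\textbf{Construction of the weighted tree.} Fix a basepoint \(o \in X\) and endow \(E(X)\) with the partial order \(\preceq_{o, E(X)}\). For each \(x \in X \setminus \{o\}\), I define a parent \(\pi(x) \in X\) by walking along the geodesic \([x, o]_{E(X)}\) from \(x\) toward \(o\) and letting \(p_x\) be the first point at which the distance \(d(p_x, X)\) drops below a prescribed fraction of \(d(p_x, x)\); then \(\pi(x)\) is taken to be a nearest point of \(X \setminus \{x\}\) to \(p_x\). Properness and the one-dimensionality of \([x, o]_{E(X)}\) guarantee that \(p_x\) and \(\pi(x)\) exist, and by construction \(\pi(x) \preceq_{o, E(X)} x\) with \(\pi(x) \neq x\). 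Let \(T\) be the graph on \(X\) with edges \(\{\{x, \pi(x)\} : x \in X \setminus \{o\}\}\) and weights \(\omega(\{x, \pi(x)\}) \coloneqq d(x, \pi(x))\). Because every \(\pi\)-orbit strictly descends toward \(o\) under \(\preceq_{o, E(X)}\) and the cardinality of balls around \(o\) in \(X\) is finite, \(T\) is a tree with vertex set \(X\).

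\textbf{Verification of the bilipschitz bound.} The lower inequality \(d(x, x') \leq d_T(x, x')\) is immediate from the triangle inequality applied along the path \([x, x']_T\). For the upper inequality, fix distinct \(x, x' \in X\) and let \(m \in E(X)\) denote their meet under \(\preceq_{o, E(X)}\), i.e. the branch point of the tripod \(\{o, x, x'\}\). The \(T\)-path from \(x\) to \(x'\) decomposes into the upward \(\pi\)-chain from \(x\) to the first common \(\pi\)-ancestor \(z\) of \(x\) and \(x'\), followed by the reverse \(\pi\)-chain from \(z\) down to \(x'\). Using the tripod identity \(d(x, x') = d(x, m) + d(m, x')\), the goal is to bound each half of this \(T\)-path by a geometric series in the \(E(X)\)-distance to \(m\). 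The main obstacle is precisely this quantitative Whitney bookkeeping: the threshold for choosing \(p_x\) must be tuned so that the successive edge weights \(d(x_i, \pi(x_i))\) in each half-chain decay geometrically relative to the distance to \(m\), and so that the two resulting geometric sums combine to strictly less than \(8\, d(x, x')\). The strictness of the inequality is forced by discreteness of \(X\), which truncates each geometric sum at a finite stage and prevents the bound from being attained.
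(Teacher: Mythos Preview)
Your proposal sketches the right overall strategy---a Whitney-type parent assignment inside the injective hull, following Gupta---but it contains two genuine gaps that prevent it from being a proof.

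First, the claim that \(\pi(x) \preceq_{o,E(X)} x\) is false. By your own observation, the non-collinearity hypothesis means the open geodesic \((o,x)_{E(X)}\) contains no point of \(X\); hence the only points of \(X\) lying on \([o,x]_{E(X)}\) are \(o\) and \(x\) themselves. Since \(\pi(x)\) is chosen as a nearest point of \(X\setminus\{x\}\) to some interior point \(p_x\) of this segment, \(\pi(x)\) will generically branch off \([o,x]\) rather than lie on it, so \(\pi(x)\not\preceq_{o,E(X)} x\) unless \(\pi(x)=o\). Your argument that \(T\) is a tree (``every \(\pi\)-orbit strictly descends toward \(o\) under \(\preceq_{o,E(X)}\)'') therefore does not go through; you would need a different monovariant, and even \(d(o,\pi(x))<d(o,x)\) is not automatic from what you have written. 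The paper sidesteps this by choosing the root \(o\) in \(E(X)\setminus X\) and building the decomposition \emph{outward} from \(o\) in layers \(V_0,V_1,\dots\) via the explicit halving rule \(S(v)=\up_{\preceq_{o,W}}[v]\cap S_W\bigl(v,\tfrac{1}{2}d(v,c(v))\bigr)\); the tree structure then follows from an easy induction on the layers.

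Second, and more seriously, your ``verification of the bilipschitz bound'' is not a proof but a statement of what a proof would have to accomplish. You never specify the ``prescribed fraction,'' and the sentence ``the threshold \ldots\ must be tuned so that the successive edge weights \ldots\ decay geometrically \ldots\ and so that the two resulting geometric sums combine to strictly less than \(8\,d(x,x')\)'' is precisely the content of the proposition. In the paper this is where all the work lies: one introduces auxiliary points \(p_k,v_k\) along the \(T\)-path and distances \(a_k,b_k,c_k\), extracts the inequality \(c_k\le a_k+2b_k\) from the halving rule, and telescopes to obtain \(d_T(x_m,x_1)<8\,d(x_m,v_1)\) together with a matching bound on the other half of the path. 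Without fixing your threshold and carrying out an analogous computation, the constant \(8\) is entirely unsupported.
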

 
\begin{proof}
Let $E(X)$ denote the injective hull of \(X\), which is an $\R$-tree by Proposition~\ref{prop:dress-1}. For convenience, we suppose that $X\subset E(X)$ is already isometrically embedded. By Lemma~\ref{lem:convex-hull}, the subset \(W\subset E(X)\) defined by
\[
W\coloneqq\bigcup_{x, x'\in X} [x, x']_{E(X)}
\]
is convex, and thus as it is also \(0\)-hyperbolic, it is an \(\R\)-tree. By our assumption on \(X\), if \(x\), \(y\), \(z\in X\) are such that \(z\in [x, y]_W\), then \(z=x\) or \(z=y\). 
Fix a point $o\in W\setminus X$ and recall the partial order $\preceq_{o,W}$ from Section~\ref{sec:PartialOrder}. For later use, we note that it follows immediately from the definition of \(W\) that the set \(X\cap \up_{\preceq_{o,W}}[w]\) is non-empty for any \(w\in W\). As $X$ is discrete, \(d(o, X)>0\). Set $V_0\coloneqq \{o\}$ and choose a point $c(o)\in X\cap \up_{\preceq_{o,W}}[o]$ such that $d(o, c(o))= d(o,X\cap \up_{\preceq_{o,W}}[o])$. 

Suppose that $V_i\subset W\setminus X$ is given and for every $v\in V_i$ we have selected a point $c(v)\in X\cap \up_{\preceq_{o,W}}[v]$ at minimal distance to \(v\), that is, $d(v,c(v))= d(v,X\cap \up_{\preceq_{o,W}}[v]).$ Then, we define for every $v\in V_i$ the set $S(v) \subset W\setminus X$ by
$$
S(v)\coloneqq  \up_{\preceq_{o,W} }[v]\cap S_{\hspace{-0.15em}W}\Big(v, \frac{d\big(v,c(v)\big)}{2}\Big).
$$
For every $w\in S(v)$ we put $\pred(w)\coloneqq  v$. For the unique $w\in S(v)$ with $w\in [v,c(v)]_W$, we define $c(w)\coloneqq  c(v)$. For all the other $w'\in S(v)$, choose $c(w') \in \up_{\preceq_{o,W} }[w']\cap X$ at minimal distance to \(w'\). This is possible since \(X\) is proper and discrete, and thus the infimum in the definition of \(d(w', X\cap \up_{\preceq_{o,W} }[w'])\) is attained. Set $V_{i+1}\coloneqq \bigcup_{v \in V_i}S(v)$ and define $V\coloneqq  \bigcup_{i=0}^\infty V_i$ and observe that the choices $c(v)\in X$ define a map $c:V\to X$.

We claim that \(c(V)=X\). Fix \(x\in X\) and set \(\delta\coloneqq  d\big(x, X\setminus \{x\}\big)\). As \(X\) is discrete, \(\delta>0\). If $d(x,o)\leq \delta/4$, then $c(o) = x$ and $x\in c(V)$. Otherwise,
it follows from the assumptions that the distance function to $X$ restricted to $[o, x]_W\setminus U_W(x,\delta/4)$ is bounded from below by a positive number \(\varepsilon>0\). Consequently, by setting \(N\coloneqq 2 \lceil \tfrac{d(o, x)}{\varepsilon}\rceil\) we find that there exists \(v\in V_{N}\cap U_W(x,\delta/4)\). By definition of \(\delta\), it follows that \(c(v)=x\). Hence, as \(x\) was arbitrary, \(c(V)=X\), as desired. 

Let $T=(X,E)$ be the graph where $\{x,x'\}\in E$ if and only if $x\neq x'$ and there exist $v\in V$ and $w\in S(v)$ such that $x=c(v)$ and $x'=c(w)$ or vice versa.

In the following, we show by induction that \(T\) is a tree. For every \(i\geq 0\), we put \(X_i\coloneqq \bigcup_{j=0}^i c(V_j)\) and let \(T_i\subset T\) denote the subgraph induced by \(X_i\). Notice that if \(v\), \(v'\in V_{i}\) are distinct, then \(c(v)\neq c(v')\). Consequently, for every \(x\in X_{i+1}\setminus X_i\) there is a unique \(v\in V_{i+1}\) such that \(x=c(v)\) and every point \(v'\in V\) with \(c(v')=x\) is contained in the geodesic \([v,x]_W\). Hence, given \(x\in X_{i+1}\setminus X_i\) there is precisely one point \(x'\in X_{i}\) such that \(x\) and \(x'\) are adjacent in \(T\). Moreover, no two points \(x\), \(x'\in X_{i+1}\setminus X_i\) are adjacent in \(T\). Hence, since \(T_0\) is a tree and \(T_{i}\subset T_{i+1}\), it follows by induction that \(T_i\) is a tree for every \(i\geq 0\). Since \(T=\bigcup_{i=0}^\infty T_i\) and \(T_i\subset T_j\) for all \(i\leq j\), we have shown that \(T\) is a tree, as claimed. 

Now, let \(\omega\colon E\to \R\) be defined by \(\omega(\{x,x'\})\coloneqq d(x,x')\) and denote by \(d_T\) the shortest-path metric of the weighted tree \(T=(X,E, \omega)\). By the triangle inequality, \(d_T(x,x') \geq d(x,x').\)
We proceed by showing the right inequality of \eqref{eq:the-result}. We fix $x,x'\in X$ and put
\(x_1\coloneqq  \min_{\preceq_{c(o),T}}\bigl([x,x']_T\bigr).\)
Let \((x_1, \dots, x_m)\) and \((x_1^\prime, \dots, x_n^\prime)\) denote the paths \([x_1, x]_T\) and  \([x_1, x']_T\), respectively. For all \(1\leq k \leq m-1\), we define 
\begin{align*}
p_k\coloneqq  &\min_{\preceq_{o,W}} \,[x_k,x_m]_W,  &v_k &\coloneqq  \min_{\preceq_{o,W}} \,(V\cap [p_k,p_{k+1}]_W\setminus\{p_k\}).
\end{align*}
 See Figure~\ref{fig:one} for an illustration.  Further for $k\geq 2$, we put
\begin{align*}
    a_k&\coloneqq  d(v_{k-1}, p_k),& b_k&\coloneqq  d(p_k, v_k),&  c_k&\coloneqq  d(p_k, x_k).
\end{align*}
Notice that \(v_k\in [x,v_1]_W\),  \(c(v_k)=x_{k+1}\),  and $ \pred(v_1)=\pred(v_1')\eqqcolon v_0$.
\begin{figure}[t]
    \centering
    \includegraphics[width=\textwidth]{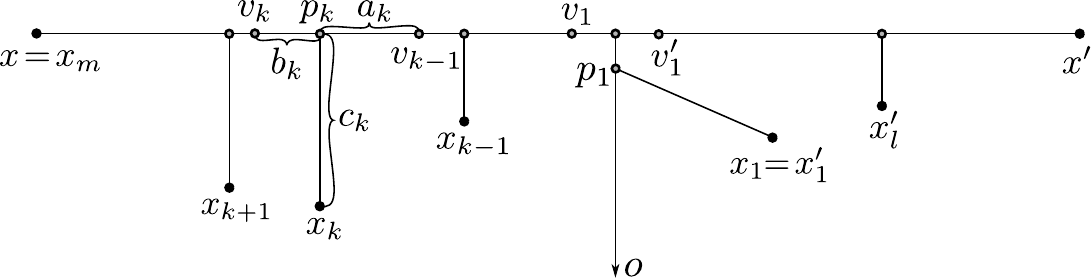}
    \caption{Illustration of the points \(x_k\), \(p_k\) and \(v_k\), and the distances \(a_k\), \(b_k\) and \(c_k\).}
    \label{fig:one}
\end{figure}
By construction,
\begin{align}\label{eq:aux3}
    d_T(x_m,x_1)= d(x_m,v_1) + 2 \sum_{k=2}^{m-1} c_k 
+ d(x_1,v_1).
\end{align}
Moreover, we have $\frac{d(v_{k-1}, x_k)}{2} = \frac{a_k + c_k}{2} \leq a_k+ b_k$ and thus $c_k\leq a_k + 2 b_k$ with equality if and only if $v_{k-1} = \pred(v_k)$, by construction of \(T\). By plugging this into \eqref{eq:aux3} we find
\[
d_T(x_m,x_1) \leq d(x_m,v_1) + d(x_1,v_1)+ 2 c_{m-1} + 4 \sum_{k=2}^{m-2} (a_k + b_k).
\]
Since
\[
d(x_m,v_1)= d(x_m, p_{m-1})+ \sum_{k=2}^{m-2}(a_k+ b_k), 
\]
after rearranging the terms, we get
\(
d_T(x_m,x_1) \leq 5d(x_m,v_1) + \text{I} + \text{II},
\)
where
\[
\text{I}\coloneqq  2c_{m-1}-4 d(p_{m-1},x_m) \quad\quad \text{ and } \quad\quad \text{II}\coloneqq  d(x_1,v_1).
\]
As \(c(v_{m-2})=x_{m-1}\) and $p_{m-1}\in [v_{m-2},x_{m-1}]_W\cap [v_{m-2},x_{m}]_W$, it follows that \(d(p_{m-1}, x_m) \geq  d(p_{m-1},x_{m-1})\). Consequently, $-4 d(p_{m-1},x_m) + 2c_{m-1} <0$, and so we obtain $\text{I}<0$.

Next, we show that \(\text{II}< 3 d(x_m, v_1)\). 
Observe that $d(v_0, x_1) \leq d(v_0,x_m)$, which directly implies $d(p_1,x_1) \leq d(p_1,x_m)$. Since $v_0 = \pred(v_1)$ and $x_1 = c(v_0)$, it follows that $d(p_1,v_1) = d(v_0,x_1)/2 - d(v_0,p_1) \leq d(p_1,x_1)/2$, and so $d(p_1,v_1) \leq d(v_1,x_m)$.  By combining these estimates, we infer
\[
d(x_1,v_1)= d(x_1,p_1) + d(p_1,v_1) \leq \frac{3}{2} d(x_1,p_1) \leq \frac{3}{2} d(p_1,x_m) \leq 3 d(v_1,x_m),
\]
where for the last inequality we have used that \(d(p_1, x_m)=d(p_1, v_1)+d(v_1,x_m)\). By the above, \(d_T(x_m,x_1) < 8d(x_m,v_1)\). Analogously, one can show that \(d_T(x_n',x_1') < 8d(x_n',v_1')\).
Since \(d(x_m,v_1)+d(x_n',v_1')\leq d(x_m, x_n')\), we arrive at \(d_T(x_m,x_n')<8 d(x_m, x_n')\). This proves the proposition.
\end{proof}

Having Proposition~\ref{prop:Gupta-for-extreme-points} at hand, the following lemma tells us that Theorem~\ref{thm:guptaGen} is valid if \(\sep X>0\). Recall that \(\sep X\coloneqq \inf \big\{ d(x,x') : x, x'\in X,\, x\neq x'\big\}\).

\begin{lemma}\label{lem:gluing-sep-bigger-than-zero}
Let \(W\) be a complete \(\R\)-tree and \(X\subset W\) a countable subset with \(\sep X>0\). Let \(C>0\) and suppose that for every connected component \(U\subset W\setminus X\) there is a \(C\)-bilipschitz map \(f_U\colon \partial U \to Y_U\) into an \(\R\)-tree \(Y_U\) such that \(\sing Y_U\subset f_U(\partial U)\). Then there exist an \(\R\)-tree \(Y\) and a \(C\)-bilipschitz map \(f\colon X\to Y\) such that \(\sing Y\subset f(X)\).
\end{lemma}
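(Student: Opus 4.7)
My strategy is to glue the trees $Y_\alpha$ together at common boundary points and verify the result has the desired properties. Because $\sep X > 0$ and $W$ is complete, $X$ is closed and discrete in $W$, and the open set $W\setminus X$ decomposes as a countable disjoint union of connected components $\{U_\alpha\}_{\alpha\in A}$, each an open convex subset of $W$ with $\partial U_\alpha \subset X$. The key geometric observation will be that for distinct indices $\alpha \neq \beta$, the intersection $\partial U_\alpha \cap \partial U_\beta$ contains at most one point: two points $x\neq x'$ in this intersection would force the geodesic $[x,x']_W$ to lie in $\overline{U_\alpha}\cap\overline{U_\beta}$ by convexity, yet the open segment $(x,x')_W$ is contained in $W\setminus X$ and hence in a single component.

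After rescaling the metric on each $Y_\alpha$ so that $d(x,x') \leq d_{Y_\alpha}(f_\alpha(x),f_\alpha(x')) \leq C\, d(x,x')$ for all $x,x'\in\partial U_\alpha$, I would define $Y$ as the quotient of $\bigsqcup_\alpha Y_\alpha$ by the equivalence relation identifying $f_\alpha(x)$ with $f_\beta(x)$ whenever $x\in \partial U_\alpha \cap \partial U_\beta$, endowed with the infimal path metric, and set $f\colon X\to Y$ by $f(x)\coloneqq [f_\alpha(x)]$ for any $\alpha$ with $x\in \partial U_\alpha$. Showing that $Y$ is an $\R$-tree reduces to showing that a gluing of $\R$-trees at single points is again an $\R$-tree: geodesicity is immediate from the path-metric construction, and the four-point condition for $Y$ follows from the four-point condition for each $Y_\alpha$ once one verifies that any four points in $Y$ span a finite sub-collection of glued trees whose gluing pattern is itself tree-like (inherited from the structure of $W$).

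For the bilipschitz estimate, given $x,x'\in X$ the geodesic $[x,x']_W$ meets $X$ in a finite ordered sequence $x = x_0, x_1, \dots, x_n = x'$, and each open segment $(x_i,x_{i+1})$ lies in some component $U_{\alpha_i}$; the $\alpha_i$ are pairwise distinct because, in the $\R$-tree $W$, any path from a point of $(x_i,x_{i+1})$ to one of $(x_j,x_{j+1})$ with $i<j$ must pass through $x_{i+1}\in X$. The concatenation of the geodesics in each $Y_{\alpha_i}$ joining $f_{\alpha_i}(x_i)$ to $f_{\alpha_i}(x_{i+1})$ produces a geodesic in $Y$ from $f(x)$ to $f(x')$, and summing the bilipschitz inequalities in each factor gives $d(x,x') \leq d_Y(f(x),f(x')) \leq C\, d(x,x')$. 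For the singular-set claim, any $y\in Y\setminus f(X)$ lies in the interior of a unique $Y_\alpha$ away from gluing points, and thus a neighborhood of $y$ in $Y$ is isometric to a neighborhood in $Y_\alpha$; if $y$ were singular in $Y$ it would be singular in $Y_\alpha$, contradicting $\sing Y_\alpha \subset f_\alpha(\partial U_\alpha)$. The main obstacle will be giving a rigorous proof that the quotient pseudometric on $Y$ is in fact a metric making $Y$ an $\R$-tree; although the geometric picture is transparent, the argument requires carefully using the tree structure of $W$ to organize the countably many gluings into a coherent tree of trees.
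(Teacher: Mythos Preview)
Your approach is essentially the paper's: rescale each $Y_\alpha$, glue the trees at the (at most one) common boundary point between adjacent components, use the finite chain $x=x_0,\dots,x_n=x'$ along $[x,x']_W$ for the bilipschitz estimate, and argue locally for the singular-set inclusion. The only difference is organizational: instead of forming the quotient all at once, the paper chooses an enumeration $U_0,U_1,\dots$ of the components so that each $\overline{U_i}$ meets $\overline{U_0}\cup\dots\cup\overline{U_{i-1}}$ in exactly one point (possible because, as you note, the adjacency structure of components inherited from $W$ is tree-like), and then glues $Y_{U_i}$ onto the previously built tree one step at a time. Since gluing two $\R$-trees at a single point is trivially an $\R$-tree, each finite stage $Y_i$ is an $\R$-tree and so is the increasing union $Y=\bigcup_i Y_i$; this sidesteps exactly the obstacle you flag at the end concerning the quotient pseudometric.
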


\begin{proof}
By rescaling (if necessary) we may suppose that for every connected component \(U\subset X\setminus W\),
\begin{equation}\label{eq:rescaled}
d(x,y)\leq d_{Y_U}(f_U(x),f_U(y)) \leq C d(x,y)
\end{equation}
for all \(x\), \(y\in U\). Since \(X\subset W\) is countable, we can write \(W\setminus X=\bigcup_{i=0}^\infty U_i\) for the decomposition of \(W\setminus X\) into open connected components.

We put \(X_i\coloneqq \partial U_i\) and  \(X_{\leq i}\coloneqq \bigcup_{j\leq i} X_j\), and \(W_{i}\coloneqq \overline{U_i}\) and  \(W_{\leq i}\coloneqq \bigcup_{j\leq i} W_j\) for all \(i\geq 0\). Since \(\sep X>0\), for every \(x\), \(x'\in X\) the geodesic \([x,x']_W\) meets only finitely many of the connected components \(U_i\). Thus, by fixing a component  \(U_\varnothing \subset W\setminus X\) the connected components of \(W\setminus X\) can be indexed by finite strings over a countable alphabet.
Consequently, we may choose an enumeration \(\{U_i\}_{i\geq 0}\) of the connected components of \(W\setminus X\) such that  \(U_0=U_\varnothing\) and \(W_i\cap W_{\leq i-1}\neq \varnothing\) for all \(i\geq 1\).  
We put \(\{x_i\}\coloneqq W_i\cap W_{\leq i-1}\) for all \(i\geq 1\). Notice that \(x_i\in X_i\cap X_{\leq i-1 }\).

We define the \(\R\)-trees \(Y_{i}\) and the \(C\)-bilipschitz maps \(f_{i}\colon X_{\leq i}\to Y_{i}\) by induction as follows. 
We set \(Y_{0}\coloneqq Y_{U_0}\) and \(f_{0}\coloneqq f_{Y_0}\). Now, the \(\R\)-tree \(Y_{i}\) is obtained by gluing the \(\R\)-trees \(Y_{U_{i}}\) and \(Y_{i-1}\)  along the points \(f_{U_{i}}(x_{i})\in Y_{U_i}\)
and \(f_{i-1}(x_{i})\in Y_{i-1}\). This is possible since \(x_i\in X_{\leq i-1}\). Further, the \(C\)-bilipschitz map \(f_{i}\colon X_{\leq i}\to Y_{i}\) is defined by \(f_{i}(x)\coloneqq f_{i-1}(x)\) if \(x\in X_{\leq i-1}\) and \(f_{i}(x)\coloneqq f_{U_i}(x)\) if \(x\in X_{i}\). Here, we use \eqref{eq:rescaled} to ensure that \(f_i\) is \(C\)-bilipschitz.  
By construction, \(Y_i\subset Y_j\) for all \(i\leq j\). Hence, the union of metric spaces \(Y=\bigcup_{i=0}^\infty Y_i\) is a well-defined metric space. 

Clearly, \(Y\) is an \(\R\)-tree and \(f\colon X\to Y\) defined by \(f(x)\coloneqq f_i(x)\) if \(x\in X_{\leq i}\) is well-defined and \(C\)-bilipschitz. 
Moreover, for every \(i\geq 0\) there exists a canonical isometric embedding \(\alpha_i\colon Y_{U_i}\to Y\) such that \(\alpha_i(Y_{U_i})\cap \alpha_j(Y_{U_j})\subset f(X)\) if \(i\neq j\). Fix \(y\in Y\setminus f(X)\). There is a unique integer \(i\geq 0\) such that \(y\in \alpha_i(Y_{U_i})\). Since \(\sing Y_{U_i}\subset f_{U_i}(X_{i})\), it is easy to check that \(y\) is a regular point of \(Y\). This implies that \(\sing Y\subset f(X)\), as was to be shown.
\end{proof}

Finally, we give the proof of Theorem~\ref{thm:guptaGen}.

\begin{proof}[Proof of Theorem~\ref{thm:guptaGen}]
Let $X_n\subset X$, \(n\geq 1\), denote a $1/n$-net and $U_{n,\alpha} \subset E(X_n)\setminus X_n$ a connected component. The metric space $X_{n,\alpha}\coloneqq \partial U_{n,\alpha} \subset X_n$ is 0-hyperbolic and \(\sep X_{n,\alpha}\geq 1/n\). By Proposition~\ref{prop:dress-1}, \(E(X_n)\) is a complete \(\R\)-tree and so it follows that there are no three distinct points $x$,$y$,$z\in X_{n,\alpha}$ such that $d(x,z) = d(x,y) + d(y,z).$ 
Hence, by Proposition~\ref{prop:Gupta-for-extreme-points}, there exist an \(\R\)-tree \(Y_{n,\alpha}\) and an \(8\)-bilipschitz map \(f_{n,\alpha}\colon X_{n,\alpha} \to Y_{n,\alpha}\) which satisfies \(\sing  Y_{n,\alpha}\subset f_{n,\alpha}\big(X_{n,\alpha}\big)\).

Now, on account of Lemma~\ref{lem:gluing-sep-bigger-than-zero} for each \(n\geq 1\) there is an \(8\)-bilipschitz map \(f_n\colon X_n\to Y_n\) into an \(\R\)-tree \(Y_n\) such that \(\sing Y_n \subset f_n(X_n)\). 
Let $\mathfrak{U}$ be a free ultrafilter on $\N$. 
Clearly, $f_{\mathfrak{U}}: (X_n)_{\mathfrak{U}} \to (Y_n)_{\mathfrak{U}}$ defined by \(f_{\mathfrak{U}}((x_n)_\mathfrak{U})\coloneqq (f_n(x_n))_\mathfrak{U}\) is \(8\)-bilipschitz. By virtue of Lemma~\ref{lem:ultrafilter}, $Y\coloneqq (Y_n)_{\mathfrak{U}}$ is a complete \(\R\)-tree and 
\begin{equation}\label{eq:inclusion-sing}
\sing Y \subset \bigl(\sing Y_n\bigr)_{\mathfrak{U}} \subset  \big(f_n(X_n) \big)_{\mathfrak{U}}=f_{\mathfrak{U}}\big((X_n)_{\mathfrak{U}}\big). 
\end{equation}
Using that \(X\) is proper, we find that \((X_n)_{\mathfrak{U}}\) and \(X\) are isometric via the map \(\iota\colon (X_n)_{\mathfrak{U}}\to X\) defined by \(\iota((x_n)_\mathfrak{U})\coloneqq \lim_\mathfrak{U} x_i\). Hence, \(f\coloneqq f_{\mathfrak{U}} \circ \iota^{-1}\) is \(8\)-bilipschitz and because of \eqref{eq:inclusion-sing}, \(\sing Y\subset f(X)\). This completes the proof. 
\end{proof}

\printbibliography

\end{document}